\newlength{\LPlhbox}
\newtheorem{theorem}{Theorem}
\newtheorem{corollary}{Corollary}
\newtheorem{proposition}{Proposition}
\newtheorem{lemma}{Lemma}
\theoremstyle{definition}
\newtheorem{remark}{Remark}
\def\leq{\leqslant}
\def\geq{\geqslant}
\def\R{\mathbb{R}}
\def\Z{\mathbb{Z}}
\def\P{\mathcal{P}}
\def\S{\mathcal{S}}
\def\x{\boldsymbol{x}}
\def\y{\boldsymbol{y}}
\def\u{\boldsymbol{u}}
\def\a{\boldsymbol{a}}
\def\cc{\boldsymbol{c}}
\def\eell{\boldsymbol{\ell}}
\def\ds{\displaystyle}
\def\zero{\boldsymbol{0}}
\def\uun{\hat{\boldsymbol{1}}}
\def\leqlex{\leq_{\text{lex}}}
\def\geqlex{\geq_{\text{lex}}}
\def\leqslex{<_{\text{lex}}}
\def\geqslex{>_{\text{lex}}}
\newcommand{\lexgeq}{\geqslant_{\mathrm{lex}}}
\newcommand{\lexmin}{\operatorname{lexmin}}
\DeclareMathOperator{\lexmax}{lexmax}
\newcommand{\LPblocktag}[2]{\settowidth{\LPlhbox}{(#1)}%
	\parbox{\LPlhbox}{\begin{equation}\tag{#1}#2\end{equation}}%
	\hspace*{\fill}}
\title{Linear lexicographic optimization and preferential bidding system}
\author{Nour ElHouda Tellache}
\address{N.E.H. Tellache, College of Science and Engineering, Hamad Bin Khalifa University, Doha, Qatar}
\email{nour.tellache@gmail.com}
\author{Fr\'ed\'eric Meunier}
\address{F. Meunier, CERMICS, \'Ecole des Ponts, 77455 Marne-la-Vall\'ee CEDEX, France}
\email{frederic.meunier@enpc.fr}
\author{Axel Parmentier}
\address{A. Parmentier, CERMICS, \'Ecole des Ponts, 77455 Marne-la-Vall\'ee CEDEX, France}
\email{axel.parmentier@enpc.fr}
  \keywords{Linear lexicographic optimization; preferential bidding system; column generation; shortest paths}
\begin{document}

\maketitle

\begin{abstract}
Some airlines use the preferential bidding system to construct the schedules of their pilots. In this system, the pilots bid on the different activities and the schedules that lexicographically maximize the scores of the pilots according to their seniority are selected. A sequential approach to solve this maximization problem is natural: the problem is first solved with the bids of the most senior pilot; then it is solved with those of the second most senior without decreasing the score of the most senior, and so on. The literature admits that the structure of the problem somehow imposes such an approach.

The problem can be modeled as an integer linear lexicographic program. We propose a new exact method, which relies on column generation for solving its continuous relaxation. To design this column generation, we prove that bounded linear lexicographic programs admit ``primal-dual'' feasible bases and we show how to compute such bases efficiently. 

Another contribution on which our exact method relies consists in the extension of standard tools for resource-constrained longest path problems to their lexicographic versions. This is useful in our context since the generation of new columns is modeled as a lexicographic resource-constrained longest path problem.

Numerical experiments show that this new method is already able to solve industrial instances provided by Air France, with up to $150$ pilots. By adding a last ingredient in the resolution of the longest path problems, which exploits the specificity of the preferential bidding system, the method achieves for these instances computational times that are compatible with operational constraints.
\end{abstract}

\section{Introduction}

\subsection{Context}
The crew scheduling problem is one of the most important problems in the airline planning process because the total crew cost (salaries, benefits, and expenses) is considered, next to the fuel cost, the largest single cost of an airline~\citep{Gopalakrishnan2005}. For large fleets, this problem is decomposed into crew pairing and crew assignment, which are solved sequentially~\citep{Kasirzadeh2017}. The first consists of generating a set of least-cost crew pairings (sequences of flights starting and ending at the same crew base) that cover all the flights. The second aims at finding monthly schedules (sequences of pairings) for crew members that cover the pairings previously built. The integrated crew pairing and crew assignment problem has also been studied in the literature; see, e.g., the work by~\cite{ZEIGHAMI2020211}.

Different scheduling processes exist to construct the schedules of the crew members. With the bidline process, anonymous schedules are first computed (various objectives can be considered). Crew members bid on these schedules and are later assigned to them according to their bids and seniority~\citep{BOUBAKER201050}. The rostering process consists of computing personalized schedules, taking into account preassigned activities such as vacations and training periods while optimizing an objective function such as the cost, the covered pairings, the global satisfaction, etc.~\citep{Gamache1999, kohl2004}. In the preferential bidding system (PBS), that is the subject of the present paper, the crew members express their preferences on the various activities (pairings, pre-assignments, rest periods, etc.) in the form of bids. Then, the system  builds the schedules that maximize the scores of the crew members in a lexicographic order: the most senior is served first, then the second one, etc. The score of a schedule for a crew member is calculated from his bids.

As discussed hereafter, designing a method that outputs a solution satisfying exactly the requirements of the preferential bidding system is a difficult problem, and most of the literature on this problem focuses on its application to pilots. In this paper, we also consider the case of pilots.

\subsection{Challenges of exact methods} The number of feasible schedules grows exponentially with the number of pairings, and this clearly makes the PBS problem difficult. Yet, classical techniques from mathematical programming are available to deal efficiently with such issues (e.g., column generation), but what makes it non-standard is its lexicographic objective function. Two exact approaches for dealing with such an objective function 
have been considered in the literature: a ``sequential'' approach and a ``weighting'' approach. 

The principle of a sequential approach goes as follows. First, the largest possible score that is achievable by the most senior pilot is determined: this is done by solving the PBS problem with the objective function restricted to his preferences. Then, the largest possible score that is achievable by the second most senior pilot, without decreasing that of the most senior, is determined: this is done by solving the problem with the objective function restricted to the preferences of the second most senior, and by restricting the set of feasible solutions to those yielding the optimal score for the most senior. And so on.

A weighting approach consists in considering a single objective function where the score of each pilot gets a weight. This way, we replace the original integer linear lexicographic program by a classical integer linear program (e.g., with a single objective function). The challenge here is to find the weights so that the optimal solutions of the two programs are the same. Such weights exist: indexing the pilots from $1$ to $m$ from the most senior to the least senior, 
choose for pilot $i$ a weight equal to $M^{m-i}$, where $M$ is any upper bound on the maximal achievable score over all pilots. To be valid in full generality, these weights must actually increase exponentially with the number $m$ of pilots. \cite{GamachePBS1998} and \cite{achour2007} note that numerical precision issues prevent any use of this approach for realistic number of pilots and they conclude to the necessity of the sequential approach.

The two approaches reduce then to the classical resolution of integer programs with a single objective function, which are efficiently solved via column generation for the continuous relaxation and via branch-and-bound to get optimal integer solutions.

\subsection{Contributions} 

We propose a third alternative to the sequential and weighting approaches.

The key element of our approach is the design of a new method for solving linear lexicographic programs, which relies on column generation. Two preliminary theoretical results have been required to design such a column generation. \cite{Isermann1982} has shown how a natural extension of the definition of reduced costs leads to a simplex-like method for solving linear lexicographic programs. Similarly to the classical setting, the reduced cost of an entering variable indicates its potential improvement on the whole lexicographic objective. Our first theoretical result (Corollary~\ref{cor:dual-basic}) is that, with this extended definition of reduced costs, bounded linear lexicographic programs admit ``primal-dual'' feasible bases. The existence of such bases opens the door to column generation methods. \cite{akgul1984} sketched in a short note a promising way to compute an optimal basis of a linear lexicographic program, relying on specific properties of standard linear programming. Our second theoretical result (Theorem~\ref{thm:lex-reg}) completes his idea by showing that the obtained basis is actually primal-dual feasible. Thanks to this, we can use current off-the-shelf solvers to find primal-dual feasible bases for linear lexicographic programs and avoid re-implementing any simplex-like method. We emphasize that, although most of these solvers can solve such lexicographic programs directly, we do not use this option because none of them give access to reduced costs or primal-dual feasible bases (in the lexicographic sense). It seems that they proceed by adding constraints, and it is not clear how to retrieve these elements.

In the case of the PBS problem, the task of generating new columns (the ``pricing'' problem) takes the form of a resource-constrained longest path problem with lexicographic costs. We show that a classical algorithm based on bounds and dominance can solve this problem. We eventually achieve excellent performances by exploiting the specific form of the pricing problem (the ``reduction'' trick).



For the overall exact method we propose for the PBS problem, we follow a standard approach relying on the efficient resolution of the continuous relaxation via column generation (see, e.g., the work by \cite{Valentina2016}). It goes as follows. First, a continuous relaxation is solved via column generation, providing an upper bound to the optimal value. This continuous relaxation takes the form of a linear lexicographic program (the ``master'' problem) and, as usual, the game consists in identifying columns with positive reduced costs (the ``pricing'' problem). Second, a lower bound is computed with an IP solver that can deal with lexicographic objectives (like Gurobi). Third, columns ``in the gap,'' that is new columns that might improve the value of the solution, are generated (and identified as such via their reduced costs), and the solver is called again. We emphasize that all comparisons involved in expressions like ``upper bound,'' ``lower bound,'' ``positive,'' and ``in the gap'' are understood according to the lexicographic order.

Experiments have been conducted on instances provided by the French airline Air France and have proved the efficiency of the method: most of the realistic instances we worked with have been solved exactly in around $1$ hour, while having up to $100$ pilots. Such instances are already considered as quite large by Air France---and anyway are the largest considered in the literature in terms of number of pilots---and the computational times are compatible with their operational constraints. Our method is thus applicable in practice. We also believe that what makes this method appealing is not only its efficiency but also its conceptual simplicity.

So, we can summarize our main contribution not only as an efficient method for solving the PBS problem but also as the proof that standard schemes of mathematical programming (such as column generation, bounds, dominance, etc.) can be extended directly to optimization problems with lexicographic objectives. 

\section{Literature review}

In accordance with our approach, the literature review is split into two parts: solving the PBS problem; solving linear lexicographic programs. We have not been able to find relevant works on shortest or longest paths in a graph with lexicographic costs. 

\subsection{PBS problem}
The PBS problem has been less studied in the literature in comparison to the other crew assignment systems. The earliest papers dealing with this problem use heuristics that sequentially construct the schedules of the crew members following a seniority order. \cite{Moore1978} and \cite{Byrne1988} have proposed greedy heuristics for Qantas that build a feasible schedule for each crew member, from the most senior to the most junior, by selecting iteratively the pairing with the highest score among the residual pairings. Feasibility checks are performed in order to ensure that the iteration can yield a feasible schedule and that the problem remains feasible for the less senior crew members. 

\cite{GamachePBS1998} have proposed another sequential heuristic approach for Air Canada that associates to each crew member, starting from the most senior, a mixed integer linear program. This program consists in finding a maximal-score schedule for the current crew member, while taking into account the schedules assigned to the most senior crew members and ensuring that the less senior ones can cover the remaining pairings. This latter feasibility condition is taken into account in an approximate way by relaxing the decision variables corresponding to the less senior crew members. As a consequence, some programs may not be feasible which requires time-consuming backtracking to correct bad decisions. Each program is solved using a column generation algorithm embedded in a branch-and-bound framework. A second branching tree is used to maintain the links between the different programs in case of backtracking. This heuristic has been tested on $24$ instances with size up to $108$ pilots and the most difficult instance requires about $8$ hours to be solved. In another paper, \cite{GAMACHE20072384} added to this heuristic approach a feasibility test to avoid backtracks. This test determines at each iteration if it is possible to assign a feasible schedule to each crew member while covering all pairings. The feasibility test problem is modeled as a graph coloring problem and solved using a tabu search algorithm. The proposed method has been validated on an instance from a North American carrier with $330$ crew members (not only pilots).

Since several schedules may yield the same score for a given crew member, fixing a schedule for that crew may reduce the best scores of less senior crew members. For this reason, the approach of \cite{GamachePBS1998} is not exact. A few years later, \cite{achour2007} proposed an exact algorithm that adds to the heuristic of \cite{GamachePBS1998} a mechanism delaying the selection of a best-score schedule for a senior crew member until there is only one schedule left for that crew that can yield his best score. To do so, after solving to optimality the program of a given crew member, the set of all residual best-score schedules for that crew is computed. The sets corresponding to the best-score schedules of the most senior crew members and the sets corresponding to the feasible schedules of the junior ones are updated accordingly. If we are left with one best-score schedule for a senior crew member, a feasibility check is performed before assigning that schedule. This is done by solving the mixed integer linear program of the current crew member in which the integrality is imposed on the decision variables of all senior crew members. This program is also solved  using a branch-and-price algorithm. If the problem is not feasible, the program of the current crew member is solved again by adding a cut restricting the score of that crew. Recall that, similarly to the heuristic of \cite{GamachePBS1998}, the overall approach may result in non-feasible programs for some crew members. A backtracking is applied, in this case, using the second branching tree. This exact algorithm has been tested and compared with the heuristic of~\cite{GamachePBS1998} on $8$ instances corresponding to the PBS problem for the pilots at a North American carrier. The size of the instances varies from $17$ to $91$ pilots, and the most difficult instance requires more than $5$ days with the exact algorithm while the heuristic spends more than $7$ days on that instance.


Column generation in a lexicographic context has been recently applied by \cite{katrin2022} on a different problem. The authors studied a bin packing problem with five objective functions to be optimized in a lexicographic order. They followed a sequential approach and developed a branch-and-price algorithm for each single-objective stage. The pricing problems were formulated as shortest path problems with resource constraints and solved by dynamic-programming labeling algorithms. 

\subsection{Linear lexicographic optimization}
Most of the literature on linear lexicographic optimization focuses on sequential or weighted approaches to deal with lexicographic objectives. To our knowledge, the earliest paper that considers such problems and suggests a different approach is the work of \cite{Isermann1982}. He laid the theoretical foundation of linear lexicographic optimization, which is used to extend the ordinary simplex method to a lexicographic context where all the objectives are considered simultaneously. 
No numerical experiments are presented in the paper and the algorithm is only illustrated on an example. \cite{akgul1984} has sketched a way to solve linear lexicographic programs that considers each objective sequentially and restricts the problem at each iteration to the variables that finish the previous iteration with zero reduced costs. This prevents from adding a new constraint at each iteration, which is a natural trick to ensure the lexicographic optimization. This is actually the method we have followed for solving the master problem; see Section~\ref{sec:master}.

\cite{POURKARIMI20071348} described another approach in which they solve a single linear program. This latter corresponds to the dual of the weighted problem associated with the linear lexicographic program. The algorithm is tested on an example.

Recently, \cite{COCOCCIONI2018298} proposed a solution method that also considers all the objectives at the same time. This approach adds to the usual numeral system a new numeral, which they call ``grossone'' and which is defined as the number of elements of the set of natural numbers. (Curiously, they do not refer to the standard notion of $\aleph_0$.) This allows the authors to transform the lexicographic objective into a single objective, with ``infinitesimal'' weights whose order decreases with the decrease of the importance of the objectives. The obtained program is solved using a simplex-like method (called GrossSimplex) working with grossone-based numbers that can include infinitesimal parts. The proposed method has been tested on $4$ examples with at most $10$ objective functions. The same authors addressed~\citep{COCOCCIONI2020105177} the case in which some variables are integers. They solved the problem using a branch-and-bound algorithm in which the relaxation at each node of the search tree is solved using the GrossSimplex. The algorithm has been tested on 5 examples; 4 of them have at most 7 objective functions and the last example has 200 objectives. This later requires about 32 hours to be solved on an Intel i7 920 with cores at 3.6 GHz.


\section{Problem statement and overview of the method}

\subsection{Problem formulation and modeling}\label{subsec:pb}
The problem we consider is the PBS problem for pilots. We are given a set $\P$ of pairings and a set $\S\subseteq 2^\P$ of feasible schedules. There are $m$ pilots, identified with the integers from $1$ to $m$. We denote their set by $I\coloneqq [m]$. The numbers reflect the seniority: the most senior is identified with $1$ and the least senior is identified with $m$. Each pilot $i$ provides a score $g_{ip}$ to every pairing $p$. In practice, this score is expressed indirectly by the pilot who only formulates preferences, e.g., morning flight, specific destination, etc.

The goal is to find an injective assignment $\sigma\colon I\rightarrow \S$ of the pilots to the feasible schedules so that the selected schedules $\sigma(I)$
\begin{itemize}
    \item form a partition of the pairing set $\P$, and
    \item maximize the scores of the pilots lexicographically. 
\end{itemize}
The lexicographic maximization of the scores can be formalized as follows: the vector $(c_{1\sigma(1)}, c_{2\sigma(2)},\ldots,c_{m\sigma(m)})$ must be lexicographically maximal, where $c_{is}=\sum_{p\in s}g_{ip}$ is the score of schedule $s$ according to pilot $i$.

This problem can be modeled as the following integer linear lexicographic program:

\hspace*{-15cm}
\LPblocktag{P}{\label{main-problem}}%
\begin{minipage}{\linewidth-5cm}
	\begin{flalign}\notag
\lexmax\quad & C\x &&&\notag\\
\mbox{s.t.}   \quad  &  \ds{\sum_{s\in \S}x_{is}=1} && \forall i\in I &\label{eq:assignement} \\
& \ds{\sum_{i\in I}\sum_{s \in \S \colon s \ni p} x_{is}=1} && \forall p\in \P &\label{eq:partition} \\
& x_{is}\in\{0,1\} && \forall i\in I, s\in \S.& \notag
	\end{flalign}~
\end{minipage}

The notation ``$\lexmax\; C\x$'' means that we are looking for a feasible solution such that 
\begin{equation}\label{eq:obj}
\left( \sum_{s\in \S}c_{1s}x_{1s}, \sum_{s\in \S}c_{2s}x_{2s}, \ldots, \sum_{s\in \S}c_{ms}x_{ms}\right)
\end{equation}
is lexicographically maximal.

Even if it is not necessary at this stage, since we have a well-defined mathematical program, we remark that the notation $C\x$ does also make sense with the following definition of $C$. It is a matrix with rows identified with $I$ and with columns identified with $I \times \S$. The entry on row $i$ and column $(j,s)$ is equal to $c_{is}$ if $i=j$ and $0$ otherwise. This way, $C\x$ is exactly the (transpose of the) vector given in equation~\eqref{eq:obj}.

In the specific version of the PBS problem studied in this paper, the set $\S$ is only given implicitly. Pairings are considered over a given month. A pairing comes together with many characteristics. In the paper, we focus on the main ones: starting and ending times, 
number of flight hours, and working hours. A subset $s$ of the pairing set $\P$ belongs to $\S$ if the following constraints are satisfied (a day in a schedule is {\em on} or {\em off} depending on whether there is some work on that day or not):
\begin{itemize}
    \item No two pairings in $s$ overlap in time.
    \item The total number of days on is at most $17$ (in case it is a $30$-day month). 
     \item The days off must include at least $7$ consecutive days. (Here, a day starts at $12$am and finishes at $11{:}59$pm.)
    \item The total number of flight hours does not exceed $85$ hours.
    \item The total number of working hours does not exceed $55$ for every sequence of $7$ consecutive days.
\end{itemize}

\subsection{Preliminaries on linear lexicographic programming}\label{subsec:prel}

According to \cite{Isermann1982}, linear lexicographic programming studies linear programs with a finite number of objective functions that ``are to be optimized [...] in a \emph{lexicographic order}, i.e., low priority objectives are optimized as far as they do not interfere with the optimization of higher priority objectives.'' Such linear lexicographic programs are linear programs where the order on $\bar\R=\R\cup\{-\infty,+\infty\}$ in the objective function is replaced by the lexicographic order $\leqlex$ on $\bar\R^m$, defined as follows. We have $\x \leqslex \y$ if there exists $j\in[m]$ such that $x_j < y_j$ and $x_h = y_h$ for all $h < j$. We have $\x \leqlex \y$ if $\x\leqslex\y$ or $\x=\y$.

Without loss of generality, such a linear program can be written as follows:

\hspace*{-15cm}
\LPblocktag{LLP}{\label{lin-lex-prog}}%
\begin{minipage}{\linewidth-5cm}
	\begin{flalign}\notag
\lexmax\quad & C\x &&& \notag\\
\mbox{s.t.}   \quad  &  A\x = \boldsymbol{b} && &\notag\\
& \x\geq\zero\, , \notag &&&
	\end{flalign}~
\end{minipage}

\noindent where $A$ and $C$ are respectively $k\times n$ and $m\times n$ real matrices, and $\boldsymbol{b}$ is a vector in $\R^k$. The notation ``$\lexmax\; C\x$'' means that we are looking for a feasible $\x^*$ such that $C\x^* \geqlex C\x$ for every feasible $\x$.

Assume that $A$ has full row rank. We keep the definition of basis and feasible basis of ordinary linear programming (the objective function is not involved). We also keep the standard notation $M_X$ meaning that we restrict a matrix $M$ to a subset $X$ of its columns. Given a feasible basis $B$, we define then the \emph{reduced cost of $x_j$ with respect to $B$} as $\overline \cc_j = \cc_j - C_B A_B^{-1} \a_j$, where $\a_j$ and $\cc_j$ are the $j$-th column of $A$ and $C$, respectively. In particular, $\overline \cc_j=0$ when $j\in B$. Note that the reduced cost of a variable is now an $m$-dimensional vector, and, when $m=1$, this definition coincides with the ordinary one.

 As shown by Isermann (see the paragraph before Proposition 1 in his paper), the standard criterion ensuring the optimality of a feasible basis still holds in the lexicographic setting. A basis is \emph{primal-dual feasible} if the corresponding reduced cost $\overline \cc_j$ satisfies $\overline \cc_j \leqlex \zero$ for all $j$.

\begin{lemma}\label{lem:opt-basis}
Any solution determined by a primal-dual feasible basis is optimal.
\end{lemma}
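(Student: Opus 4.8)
The plan is to prove optimality by contradiction, mimicking the standard linear programming argument but replacing every inequality on $\R$ with the lexicographic inequality on $\R^m$. Let $B$ be a primal-dual feasible basis, and let $\x^*$ be the basic feasible solution it determines: $\x^*_B = A_B^{-1}\b$ and $\x^*_j = 0$ for $j \notin B$. Suppose $\x$ is any other feasible solution. The goal is to show $C\x^* \lexgeq C\x$, which establishes optimality.

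First I would compute the difference $C\x^* - C\x$ in terms of the reduced costs. The key algebraic identity, valid exactly as in the ordinary case because the objective matrix $C$ plays a purely formal role here, is that for any feasible $\x$ satisfying $A\x = \b$ one has
\begin{equation}\notag
C\x = C_B A_B^{-1}\b + \sum_{j} \overline{\cc}_j\, x_j = C\x^* + \sum_{j\notin B} \overline{\cc}_j\, x_j,
\end{equation}
where the last equality uses that $\overline{\cc}_j = \zero$ for $j \in B$ and that $C\x^* = C_B A_B^{-1}\b$. I would derive this by substituting $A\x = \b$ into the expression $C\x = C_B A_B^{-1}A\x + (C - C_B A_B^{-1}A)\x$, recognizing the second term as $\sum_j \overline{\cc}_j x_j$.

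From this identity the conclusion follows directly: feasibility gives $x_j \geq 0$ for all $j$, and primal-dual feasibility gives $\overline{\cc}_j \lexleq \zero$ for all $j$. Hence each term $\overline{\cc}_j\, x_j$ is a nonnegative scalar multiple of a lexicographically nonpositive vector, so $\overline{\cc}_j\, x_j \lexleq \zero$. The main point to verify is that the lexicographic order behaves well under the two operations appearing here, namely multiplication by a nonnegative scalar and summation: if $\u \lexleq \zero$ and $t \geq 0$ then $t\u \lexleq \zero$, and if $\u \lexleq \zero$ and $\boldsymbol{v} \lexleq \zero$ then $\u + \boldsymbol{v} \lexleq \zero$. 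Both are elementary consequences of the definition of $\lexl$ via the first nonzero coordinate, but they are exactly the properties that make the argument work, so I would state them explicitly. Summing over $j \notin B$ then yields $\sum_{j \notin B} \overline{\cc}_j\, x_j \lexleq \zero$, whence $C\x \lexleq C\x^*$, i.e. $C\x^* \lexgeq C\x$.

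The step I expect to require the most care is not the algebra but the closure properties of $\lexleq$ under nonnegative combinations, since the whole reduction of the lexicographic case to the ordinary case rests on the fact that $\bar\R^m$ ordered lexicographically is an ordered vector space (the lexicographic order is compatible with addition and with multiplication by nonnegative scalars). Once this is isolated as a small lemma or a remark, the proof is a verbatim transcription of the textbook optimality criterion for linear programs, with $\leq$ on $\R$ replaced throughout by $\lexleq$ on $\R^m$; this is precisely the ``just replace the order'' principle of Isermann that the paper invokes. I would therefore present the closure properties first, then the identity, then the one-line conclusion.
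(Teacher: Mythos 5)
Your proof is correct and complete. There is, however, no in-paper proof to compare it against: the paper states Lemma~\ref{lem:opt-basis} without proof, attributing it to Isermann (``see the paragraph before Proposition 1 in his paper''). What you have written is the standard textbook optimality criterion transcribed to the lexicographic setting, and it rests on exactly the machinery the paper itself deploys one lemma later: in the proof of Lemma~\ref{gap}, the paper uses the identity $C\x = \u + \sum_{j'\in N} x_{j'}\overline\cc_{j'}$ for every feasible $\x$ of the relaxation (your decomposition, with $\u = C_B A_B^{-1}\b$), together with the fact that ``the lexicographic order is compatible with addition'' (your closure properties). So your write-up in effect supplies the proof that the paper outsources. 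Your instinct to isolate the order-theoretic facts is the right one, since these are the only points where the lexicographic case could conceivably differ from the scalar case: that $\overline\cc_j \lexleq \zero$ and $x_j \geq 0$ imply $x_j\,\overline\cc_j \lexleq \zero$, that a finite sum of lexicographically nonpositive vectors is lexicographically nonpositive, and (needed for the final step, worth stating just as explicitly) translation invariance, i.e.\ $\boldsymbol{w} \lexleq \zero$ implies $C\x^* + \boldsymbol{w} \lexleq C\x^*$. All three hold here without any subtlety because every quantity involved lies in $\R^m$ with finite entries, the cost matrix $C$ of~\eqref{lin-lex-prog} being real, so the ordered-vector-space structure you invoke is available.
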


As in standard linear programming, the converse is not true: there might exist optimal solutions determined by bases that are not primal-dual feasible.


A sufficient condition for the existence of such a basis is given in the following result, which is a corollary of Theorem~\ref{thm:lex-reg}, stated and proved in Section~\ref{sec:master}. 
(The existence of a basis determining an optimal solution is proved within the proof of Lemma 2 in Isermann's paper; yet, the part regarding the dual feasibility is not present and, since it is crucial for our method to work, we provide a proof in Section~\ref{sec:master}.)

\begin{corollary}\label{cor:dual-basic}
If~\eqref{lin-lex-prog} is feasible and lexicographically bounded from above, then there exists a primal-dual feasible basis.
\end{corollary}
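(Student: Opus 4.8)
The statement is the lexicographic analogue of the classical fact that a feasible, bounded linear program attains its optimum at a basis all of whose reduced costs are nonpositive. My plan is to scalarize the lexicographic objective by a single, well-chosen weight vector, to invoke the classical result for the resulting ordinary program, and then to transfer the optimality certificate back to the lexicographic order.

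First I would record two preliminaries. Lexicographic boundedness is asserted over the feasible region, which I therefore take to be nonempty; since $A$ has full rank, the polyhedron $\{\x:A\x=\b,\ \x\geq\zero\}$ then admits at least one feasible basis. Moreover, as the feasible region is in standard form, lexicographic boundedness from above forces $C\boldsymbol{d}\lexleq\zero$ for every recession direction $\boldsymbol{d}$ (i.e.\ every $\boldsymbol{d}\geq\zero$ with $A\boldsymbol{d}=\zero$): a direction with $C\boldsymbol{d}\lexg\zero$ would, from any feasible point, increase $C\x$ without lexicographic upper bound.

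The heart of the argument---and what I expect to be the main obstacle---is an elementary domination fact about the (row) weight vector $w=(M^{m-1},M^{m-2},\ldots,1)$: for any fixed \emph{finite} family $\mathcal{V}\subseteq\R^m$ there is a threshold beyond which, for every $v\in\mathcal{V}$, one has $v\lexleq\zero\iff w\,v\leq 0$. Indeed, if the first nonzero coordinate of $v$ sits in position $h$, the term $M^{m-h}v_h$ dominates $w\,v$ once $M$ is large (with a threshold depending only on $\mathcal{V}$), so $w\,v$ inherits the sign of $v_h$. What makes this usable is finiteness: there are finitely many feasible bases $B$, hence finitely many reduced-cost vectors $\overline\cc_j^{\,B}=\cc_j-C_BA_B^{-1}\a_j$, and finitely many vectors $C\boldsymbol{d}$ as $\boldsymbol{d}$ runs over the extreme rays of the recession cone; so I can fix a single $M$ valid for all of them simultaneously. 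Pinning down this uniform $M$ is precisely where the argument has content, and it is where the finiteness of the set of bases is indispensable.

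With such an $M$ fixed, I would pass to the ordinary linear program $\max\{\,wC\x:A\x=\b,\ \x\geq\zero\,\}$. By the recession characterization and the domination fact applied to the vectors $C\boldsymbol{d}$, every recession direction satisfies $wC\boldsymbol{d}\leq 0$, so this scalar program is bounded; it is also feasible. Classical linear programming then furnishes an optimal basic feasible solution determined by a feasible basis $B^\ast$ with $w\,\overline\cc_j^{\,B^\ast}\leq 0$ for all $j$. Applying the domination fact once more, now to the reduced-cost family, gives $\overline\cc_j^{\,B^\ast}\lexleq\zero$ for every $j$, so $B^\ast$ is a primal-dual feasible basis for~\eqref{lin-lex-prog} (and, by Lemma~\ref{lem:opt-basis}, optimal). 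This reuses the ``weighting'' reduction from the introduction purely as a theoretical existence device, the exponential growth of $M$ being harmless for an existence proof even though it rules the approach out numerically.
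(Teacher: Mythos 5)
Your proof is correct, but it takes a genuinely different route from the paper's. The paper proves this corollary constructively, as a consequence of Theorem~\ref{thm:lex-reg}: it solves a nested sequence of ordinary linear programs, each restricted to the columns whose reduced cost in the previous one vanishes, and shows (via Lemma~\ref{lem:powerful}) that the final basis $B^{(m)}$ is primal-dual feasible for~\eqref{lin-lex-prog}. You instead rehabilitate the weighting approach as a pure existence device: scalarize with $w=(M^{m-1},\ldots,1)$, observe that a single $M$ can be chosen so that the sign test $w\,v\leq 0$ agrees with $v\lexleq\zero$ on the \emph{finite} family consisting of all reduced-cost vectors of all feasible bases together with the vectors $C\boldsymbol{d}$ for the extreme rays $\boldsymbol{d}$ of the recession cone, and then transfer the classical optimality certificate of the scalar program back to the lexicographic one. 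The uniform choice of $M$ over all bases is exactly what makes this work, and you identify it correctly. What each approach buys is different: yours is shorter, leans entirely on classical LP theory, and has the pleasant side effect of showing that the weighting approach, dismissed in the introduction for numerical reasons, is perfectly sound as a proof technique. But it is non-constructive---your $M$ is defined through the set of \emph{all} feasible bases, so no algorithm can be extracted from it---whereas the paper's proof doubles as the algorithm of Section~\ref{sec:master} (a sequence of $m$ ordinary LPs solvable by off-the-shelf solvers) and additionally delivers the dual vectors $\cc_{B^{(l)}}^{l\top}A_{B^{(l)}}^{-1}$ used to assemble reduced costs in the pricing step (Remark~\ref{rem:reduced}). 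So your argument could replace the paper's proof of the corollary, but not Theorem~\ref{thm:lex-reg} itself.

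One point deserves care: your recession-cone step silently fixes an interpretation of ``lexicographically bounded from above.'' Under the weakest reading---there exists $\u\in\R^m$ with $C\x\lexleq\u$ for every feasible $\x$---the inference ``a ray with $C\boldsymbol{d}\lexg\zero$ makes the problem lexicographically unbounded'' is false, and so is the corollary itself: for the feasible set $\{\x\in\R^2 :\ x_1=1,\ \x\geq\zero\}$ with $C\x=(0,x_2)^\top$, every feasible value is $\lexl (1,0)^\top$, yet $\boldsymbol{d}=(0,1)^\top$ is a recession direction with $C\boldsymbol{d}\lexg\zero$, and the unique feasible basis $\{1\}$ has $\overline\cc_2=(0,1)^\top\lexg\zero$, so no primal-dual feasible basis exists. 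Your argument needs the stronger reading that each objective $\cc^l\cdot\x$ is bounded above on the feasible set (or that the lexicographic maximum is attained); under that reading your recession claim and the rest of your proof are correct. This is not a defect specific to your argument---the paper's own construction would also break on this example, since its second program becomes unbounded---but it is a hypothesis you should state explicitly rather than leave implicit.
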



In Section~\ref{sec:master}, we will show that the approach proposed by \cite{akgul1984} provides actually an efficient way to find a basis as in Corollary~\ref{cor:dual-basic}.

We end this section with a lemma providing a way to reduce the size of an integer lexicographic program. It will play an important role in our method. The lemma is actually just an extension to the lexicographic context of a classical trick for ordinary integer programming; see \cite[Section 3.3]{crowder_solving_1983} or \cite[Proposition 2.1, p. 389]{Nemhauser1988}. Actually this trick was already used by \cite{dantzig_solution_1954} for solving to optimality an instance of the traveling salesman problem with $49$ cities.

Consider the following integer linear lexicographic program.

\hspace*{-15cm}
\LPblocktag{ILLP}{\label{ILLP}}%
\begin{minipage}{\linewidth-5cm}
	\begin{flalign}\notag
\lexmax\quad & C\x &&& \notag\\
\mbox{s.t.}   \quad  &  A\x = \boldsymbol{b} && &\notag\\
& \x\in\Z_+^n\, . \notag &&&
	\end{flalign}~
\end{minipage}

\begin{lemma} \label{gap}
Let $\eell\in \bbR^m$ be a lower bound of~\eqref{ILLP} and $\u\in \R^m$ the optimal value of its linear relaxation. Consider a primal-dual feasible basis and a non-basic variable $x_{j}$. If $\overline \cc_j\leqslex\eell-\u$, then $x_{j}$ is equal to $0$ in all optimal solutions of~\eqref{ILLP}.
 \end{lemma}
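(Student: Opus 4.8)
The plan is to transcribe the classical reduced-cost variable-fixing argument of ordinary integer programming (as in Crowder et al. or Nemhauser--Wolsey) into the lexicographic setting, taking care at each step that every inequality is read according to $\leqlex$ and that the order is compatible with the vector operations used. First I would establish the fundamental identity attached to the given primal-dual feasible basis $B$. Writing $N$ for the nonbasic indices and using $A\x=\b$ to express the basic part as $x_B = A_B^{-1}\b - A_B^{-1}A_N x_N$, a direct substitution into $C\x = C_B x_B + C_N x_N$ and the fact that the $j$-th column of $C_N - C_B A_B^{-1}A_N$ is exactly $\overline{\cc}_j$ yield, for every feasible $\x$,
\[
C\x \;=\; \u + \sum_{j\in N}\overline{\cc}_j\,x_j ,
\]
where $\u = C_B A_B^{-1}\b$ is the optimal value of the relaxation \eqref{lin-lex-prog} (the basic solution attached to $B$ is optimal by Lemma~\ref{lem:opt-basis}, so it attains $\u$). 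This identity is purely algebraic and therefore holds verbatim with $C$ an $m\times n$ matrix and $\overline{\cc}_j\in\R^m$.

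Next I would record the two order-compatibility facts used throughout, both immediate from the definition of $\leqlex$: translation invariance, namely $\a\leqlex\b$ implies $\a+\cc\leqlex\b+\cc$; and compatibility with nonnegative scaling, namely $\a\leqlex\zero$ and $t\geq 0$ imply $t\a\leqlex\zero$. Together these make $(\bar\R^m,\leqlex)$ an ordered group, which is all the structure the argument needs.

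Then I would argue by contradiction. Let $\x$ be any optimal solution of \eqref{ILLP} and suppose $x_j\neq 0$; since $\x\in\Z_+^n$, this forces $x_j\geq 1$. Primal-dual feasibility of $B$ gives $\overline{\cc}_k\leqlex\zero$ for all $k\in N$, so each term $\overline{\cc}_k x_k$ is $\leqlex\zero$ by nonnegative scaling. Dropping all terms $k\neq j$ from the identity and then using $x_j\geq 1$ with $\overline{\cc}_j\leqlex\zero$ (so that $(x_j-1)\overline{\cc}_j\leqlex\zero$, i.e.\ $\overline{\cc}_j x_j\leqlex\overline{\cc}_j$), translation invariance gives
\[
C\x \;\leqlex\; \u + \overline{\cc}_j\,x_j \;\leqlex\; \u + \overline{\cc}_j .
\]
The hypothesis $\overline{\cc}_j\leqslex\eell-\u$ together with translation invariance yields $\u+\overline{\cc}_j\leqslex\eell$, hence $C\x\leqslex\eell$. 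This contradicts the fact that $\eell$ is a lower bound of \eqref{ILLP}, which forces $C\x\geqlex\eell$ for the optimal $\x$. Therefore $x_j=0$ in every optimal solution.

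I expect no serious obstacle: the argument is the exact lexicographic analogue of the ordinary fixing lemma, and the only delicate points are keeping every comparison oriented correctly in $\leqlex$ and invoking integrality at the right place, namely to pass from $x_j\neq 0$ to $x_j\geq 1$ so that the step $\overline{\cc}_j x_j\leqlex\overline{\cc}_j$ is valid. The single genuine place where the lexicographic setting differs from the scalar one is the need for the two order-compatibility facts above, but these are elementary.
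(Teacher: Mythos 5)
Your proof is correct and follows essentially the same route as the paper's: the basic-solution identity $C\x=\u+\sum_{k\in N}\overline\cc_k x_k$, the sign condition $\overline\cc_k\leqlex\zero$ on nonbasic reduced costs, compatibility of $\leqlex$ with addition and nonnegative scaling, and integrality to pass from $x_j\neq 0$ to $x_j\geq 1$. The only differences are presentational---you argue by contradiction where the paper concludes $x_j^*=0$ directly from $\overline\cc_j\leqslex x_j^*\overline\cc_j$---and your version is in fact slightly more careful with quantifiers, since you invoke the lower-bound property $\eell\leqlex C\x$ only for an optimal integer solution, whereas the paper loosely asserts the resulting inequality for ``every feasible solution of the linear relaxation.''
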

\begin{proof}
Denote by $B$ the basis considered in the statement. According to Lemma~\ref{lem:opt-basis}, the solution determined by $B$ is optimal and thus $C_BA_B^{-1}\boldsymbol{b}=\u$. By definition of reduced costs, every feasible solution $\x$ of the linear relaxation satisfies therefore $C\x=\u+\sum_{j'\in N}x_{j'}\overline\cc_{j'}$, where $N$ is the set $[n]\setminus B$ of non-basic indices. The vector $\eell$ being a lower bound, we have $\eell-\u\leqlex \sum_{j'\in N}x_{j'}\overline\cc_{j'}$. Since $\overline \cc_{j'}\leqlex\zero$ for all $j'\in N$, every feasible solution $\x$ of the linear relaxation satisfies $\eell-\u\leqlex x_j \overline \cc_j$. 

Consider now an optimal solution $\x^*$ of~\eqref{ILLP} and suppose that $\overline \cc_j\leqslex\eell-\u$. According to what has just been proved, we have $\eell-\u\leqlex x^*_j \overline \cc_j$, which implies $\overline \cc_j\leqslex x^*_j \overline \cc_j$. Since $\overline \cc_j\leqlex\zero$, we have $a\overline \cc_j\leqlex\overline \cc_j$ for every positive integer $a$ (because the  lexicographic order is compatible with addition). Therefore, $x_j^*=0$, which is the desired conclusion.
\end{proof}

\subsection{Method for solving the PBS problem} 
Informally, our method relies on a column generation method in which the ordinary linear programming theory is replaced by the linear lexicographic programming theory. As in any column generation method, the mathematical program to be solved is restricted to a subset of the original variables, which is updated iteratively. We do not depart from this procedure and the following mathematical program will play a central role. Here, $\S'_i$ is a subset of $\S$.

\hspace*{-15cm}
\LPblocktag{P'}{\label{partial-problem}}%
\begin{minipage}{\linewidth-5cm}
	\begin{flalign}\notag
\lexmax\quad & C\x \notag\\
\mbox{s.t.}   \quad  &  \ds{\sum_{s\in \S'_i}x_{is}=1} && \forall i\in I &
\\
& \ds{\sum_{i\in I}\sum_{s \in \S'_i \colon s \ni p} x_{is}=1} && \forall p\in \P &
\\
& x_{is}\in\{0,1\} && \forall i\in I, s\in \S'_i.& \notag
	\end{flalign}~
\end{minipage}

The overall algorithm is Algorithm~\ref{algo}. It makes use of the notion of ``linear relaxation'' and of ``reduced costs'' in linear lexicographic programming. Reduced costs have already been defined in Section~\ref{subsec:prel}. As expected, the linear relaxation of~\eqref{partial-problem} consists simply in replacing $x_{is}\in\{0,1\}$ by $x_{is}\in[0,1]$.

Apart from relying on linear lexicographic programming, it follows a quite standard strategy for solving integer programs with a huge number of variables and with ``partitioning'' constraints; see, e.g., the work by \cite{Valentina2016}.

\begin{algorithm}
\caption{Exact algorithm for solving~\eqref{main-problem}\label{algo}}
\begin{algorithmic}[1]
    \State\label{init} \hspace{-0.85mm}\textbf{initialize} the $\S'_i$ in such a way \eqref{partial-problem} is feasible;
    \Repeat
    \State\label{lin-rel} compute a primal-dual feasible basis of the linear relaxation of~\eqref{partial-problem};
    
    \Comment{\begin{footnotesize}denote the optimal value by $\u$\end{footnotesize}}
    \State\label{pricing} for each pilot $i$, find some feasible schedules with lexicographically positive reduced costs and add them to $\S'_i$;
    \Until{\label{until}(no feasible schedule with lexicographically positive reduced cost exists)}
    \State\label{solver} solve~\eqref{partial-problem}; \Comment{\begin{footnotesize}denote the optimal value by $\eell$\end{footnotesize}}
    \State\label{pricing2} for each pilot $i$, find all feasible schedules with reduced cost lexicographically non-smaller than $\eell - \u$ and add them to $\S'_i$;
    \State\label{solver2} solve~\eqref{partial-problem};
    \State \Return an optimal solution of the last resolution;
\end{algorithmic}
\end{algorithm}


Step~\ref{lin-rel} is well-defined by Corollary~\ref{cor:dual-basic} because~\eqref{partial-problem} is lexicographically bounded from above. The algorithm terminates: at each iteration of the loop, the size of $\S'_i$ increases for at least one pilot $i$ (no feasible schedule in $\S'_i$ can have a lexicographically positive reduced cost because the basis is primal-dual feasible), and $\S'_i$ is included in $\S$ which is finite. Lemma~\ref{lem:opt-basis} ensures that, when we leave the loop at step~\ref{until}, the vector $\u$ is the optimal value of the linear relaxation of~\eqref{main-problem}. At step~\ref{solver}, the vector $\eell$ is a lexicographic lower bound on the optimal value since any feasible solution of~\eqref{partial-problem} is a feasible solution of~\eqref{main-problem}. Lemma~\ref{gap} shows then that the returned solution is indeed optimal.

We end this subsection with further detail on how each step of the overall algorithm can be performed. 
Step~\ref{init} can be implemented in various ways: heuristics, previous solutions available to the airline, column generation, etc. In the experiments, we will assume that we are given a feasible solution, i.e., simply a collection of $m$ feasible schedules forming a partition of $\P$. 
Step~\ref{lin-rel} corresponds in the column generation terminology to the resolution of the master problem. The simplex algorithm by Isermann would be relevant here; yet, we follow Akg\"ul's approach avoiding the tricky implementation of a simplex algorithm and relying on the performance of the current linear programming off-the-shelf solvers. It is the object of Section~\ref{sec:master}. Steps~\ref{pricing} and~\ref{pricing2} correspond in the column generation terminology to the resolution of the pricing problem. Here, it requires solving a resource-constrained longest path problem on a DAG with a lexicographic objective. We propose in Section~\ref{sec:pricing} an efficient algorithm for solving such a problem (we have not found any work dealing with this problem in the literature). Steps~\ref{solver} and~\ref{solver2} consist in solving an integer linear lexicographic program. This could be achieved by the standard branch-and-bound method. Nevertheless, it turns out that off-the-shelf solvers already offer efficient tools to solve such programs, and we rely on such a solver in our experiments.

\section{Solving the master problem}\label{sec:master}

The purpose of this section is to explain how to compute a primal-dual feasible basis of \eqref{lin-lex-prog}, which we assume to be feasible and lexicographically bounded from above. It is the task to be performed at step~\ref{lin-rel} of Algorithm~\ref{algo}.

Isermann has sketched an approach for computing such a basis via a ``simplex algorithm for linear lexicographic programs.'' We formalize Akg\"ul's idea, which allows instead to rely on the usual simplex algorithm and to use off-the-shelf solvers.
While Akg\"ul explained that this approach computes an optimal basis, we prove that it actually computes a primal-dual feasible basis. 

Let $\cc^l$ be the (column) vector $(c_{l1},c_{l2},\ldots,c_{ln})^\top$. (It is the $l$-th cost vector and its entries are those of the $l$-th row of $C$.) The approach consists in considering the following sequence of linear programs, indexed by $l=1,\ldots,m$, and whose variables are indexed by elements in a set $S^{(l)}$, defined inductively hereafter:

\begin{equation}\tag{P$_{l}$}\label{Pell}
\begin{array}{rl}
\max & \cc_{S^{(l)}}^{l}\cdot\y\smallskip \\
\mbox{s.t.} & A_{S^{(l)}}\y=\boldsymbol{b} \\
& \y\geq\zero\, .
\end{array}
\end{equation}
Set $S^{(1)}=[n]$. Then, for $l=1,\ldots,m$, 
\begin{itemize}
\item define $B^{(l)}$ as any primal-dual feasible basis of (\hyperref[Pell]{P$_{l}$}).
\item set $S^{(l+1)}=\{j\in S^{(l)}\colon c_{lj} = \cc_{B^{(l)}}^{l\top} A_{B^{(l)}}^{-1}\a_j\}$. (This set is formed by the indices of the variables with a reduced cost equal to $0$ with respect to $B^{(l)}$.)
\end{itemize}
We have in particular $B^{(l)}\subseteq S^{(l+1)}\subseteq S^{(l)}$ for all $l\leq m$.

Note that computing this sequence of primal-dual bases can be easily performed with a standard linear programming solver.

\begin{theorem}\label{thm:lex-reg}
The basis $B^{(m)}$ is a primal-dual feasible basis of~\eqref{lin-lex-prog}.
\end{theorem}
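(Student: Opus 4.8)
The plan is to show that the final basis $B^{(m)}$ produced by the inductive construction satisfies the two conditions of a primal-dual feasible basis of~\eqref{lin-lex-prog}: it is a feasible basis for the constraint system $A\x=\b,\ \x\geq\zero$, and every reduced cost vector $\overline\cc_j\in\R^m$ is lexicographically non-positive. The key observation driving the proof is that the sets $S^{(l)}$ track exactly which variables remain ``candidates'' for the higher-priority objectives: a variable $x_j$ survives into $S^{(l+1)}$ precisely when its (ordinary, scalar) reduced cost with respect to the $l$-th objective and the basis $B^{(l)}$ is zero, i.e.\ when it does not strictly hurt objective $l$. Since each $B^{(l)}$ is a primal-dual feasible basis of the scalar program~\eqref{Pell}, the scalar reduced costs of objective $l$ are all $\leq 0$ on $S^{(l)}$, and they are $=0$ exactly on $S^{(l+1)}$.

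First I would record the structural facts about the nested sequence. The chain $B^{(l)}\subseteq S^{(l+1)}\subseteq S^{(l)}\subseteq[n]$ is already noted in the excerpt, and I would verify that $B^{(l)}\subseteq S^{(l+1)}$ holds because basic variables of~\eqref{Pell} have zero scalar reduced cost. This guarantees that $B^{(m)}$ is actually a basis of the full matrix $A$ (its columns lie in $[n]$ and form a feasible basis of each~\eqref{Pell}, hence of the original system since the constraints $A\x=\b,\ \x\geq\zero$ are identical across all levels). So primal feasibility of $B^{(m)}$ is immediate; the real content is the lexicographic sign of the reduced costs.

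The main step is to compute the lexicographic reduced cost $\overline\cc_j$ of an arbitrary non-basic index $j$ with respect to $B^{(m)}$ and check $\overline\cc_j\lexleq\zero$. I would fix $j$ and examine its components from the top. Writing $\overline c_j^{\,l}$ for the $l$-th component of $\overline\cc_j$, I claim that for each $l$ the scalar value $\overline c_j^{\,l}=c_{lj}-\cc_{B^{(l)}}^{l\top}A_{B^{(l)}}^{-1}\a_j$ coincides with the reduced cost of $x_j$ for the scalar program~\eqref{Pell} whenever $j\in S^{(l)}$ — here I must be careful that the dual vector used in level $l$ of the lexicographic computation is $\cc_{B^{(l)}}^{l\top}A_{B^{(l)}}^{-1}$, which is exactly the quantity appearing in the definition of $S^{(l+1)}$. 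The crucial point is then a dichotomy for the first index $l$ at which $\overline c_j^{\,l}\neq 0$: if $j\in S^{(l)}$ but $j\notin S^{(l+1)}$, then by definition $c_{lj}\neq\cc_{B^{(l)}}^{l\top}A_{B^{(l)}}^{-1}\a_j$, and since $B^{(l)}$ is primal-dual feasible for~\eqref{Pell} this reduced cost is strictly negative; moreover $j$ has zero reduced cost for all levels $h<l$ (because it remained in $S^{(h+1)}$), so the leading nonzero component of $\overline\cc_j$ is negative, giving $\overline\cc_j\lexl\zero$. If instead $j\in S^{(m+1)}$ survives all the way (equivalently $j$ never drops out), then all scalar reduced costs relevant to the surviving levels are zero and one checks $\overline\cc_j=\zero$. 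Either way $\overline\cc_j\lexleq\zero$.

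The main obstacle I anticipate is the bookkeeping needed to justify that the $l$-th component of the \emph{lexicographic} reduced cost with respect to $B^{(m)}$ equals the \emph{scalar} reduced cost computed with the dual of $B^{(l)}$, rather than the dual of $B^{(m)}$. This requires arguing that restricting to $S^{(l+1)}$ and re-optimizing at higher levels does not disturb the level-$l$ dual values on the surviving columns — intuitively true because on $S^{(l+1)}$ the level-$l$ objective is constant (reduced cost zero) along the feasible set, so any subsequent basis drawn from $S^{(l+1)}$ yields the same level-$l$ dual. I would make this precise by showing that for $j\in S^{(l+1)}$ the scalar level-$l$ reduced cost is zero regardless of which feasible basis contained in $S^{(l+1)}$ is used, and in particular for $B^{(m)}\subseteq S^{(l+1)}$, which aligns the lexicographic and scalar computations componentwise. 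Once that alignment is established, the dichotomy above closes the argument and $B^{(m)}$ is primal-dual feasible for~\eqref{lin-lex-prog}.
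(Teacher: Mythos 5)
Your proposal is correct and takes essentially the same route as the paper: the ``alignment'' fact you isolate as the main obstacle---that any feasible basis contained in $S^{(l+1)}$ yields the same level-$l$ dual vector as $B^{(l)}$, so that the $l$-th component of the reduced cost with respect to $B^{(m)}$ equals the scalar reduced cost of \eqref{Pell}---is precisely the paper's Lemma~\ref{lem:powerful}, proved there by the one-line computation $\cc_{B'}^{l\top}=\cc_{B^{(l)}}^{l\top}A_{B^{(l)}}^{-1}A_{B'}$, i.e.\ $\cc_{B'}^{l\top}A_{B'}^{-1}=\cc_{B^{(l)}}^{l\top}A_{B^{(l)}}^{-1}$, and your drop-out dichotomy matches the paper's induction showing $j\in S^{(l)}$ for all $l\leq l^*$ (the first index with $\overline c_{l^*j}\neq 0$) followed by strict negativity at $l^*$ from primal-dual feasibility of $B^{(l^*)}$. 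One point of care when writing it up: formalize the key step as equality of the dual \emph{vectors} (your phrase ``yields the same level-$l$ dual''), not merely as vanishing reduced costs on $S^{(l+1)}$, because at the drop-out level you invoke the alignment at a column $j\in S^{(l)}\setminus S^{(l+1)}$, and only the dual-vector equality transfers strict negativity to that column.
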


The proof of Theorem~\ref{thm:lex-reg} requires an easy technical lemma. We are not aware of any reference in the literature.

\begin{lemma}\label{lem:powerful}
Consider a linear program $\max\{\cc'\cdot\x\colon A'\x =\boldsymbol{b}', \x\geq\zero\}$ with $A'$ being full row rank. Let $B$ be a feasible basis. Set $S=\{j\colon c'_j = \cc_B^{'\top} A_B^{'-1} \a'_j\}$. Let $B'$ be any feasible basis included in $S$. Then $\cc_{B'}^{'\top} A_{B'}^{'-1}= \cc_B^{'\top} A_B^{'-1} $.
\end{lemma}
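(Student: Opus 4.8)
The plan is to recognize the quantity $\cc_B^{'\top}A_B^{'-1}$ as the vector of simplex multipliers (the dual vector) associated with the feasible basis $B$, and to observe that the set $S$ is precisely the set of indices at which the corresponding dual constraint is tight. Writing $\y^\top \coloneqq \cc_B^{'\top}A_B^{'-1}$, the defining condition $j\in S$ reads $c'_j = \y^\top\a'_j$, that is, the reduced cost $c'_j - \y^\top\a'_j$ vanishes. In passing this makes $B\subseteq S$ immediate, since basic columns always have zero reduced cost, but that inclusion is not what drives the argument.

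The main step is then a short uniqueness argument. Since $B'\subseteq S$, every column indexed by $B'$ satisfies $c'_j=\y^\top\a'_j$; collecting these equalities over all $j\in B'$ yields the matrix identity $\y^\top A_{B'}^{'} = \cc_{B'}^{'\top}$. Because $B'$ is a basis, $A_{B'}^{'}$ is a square invertible matrix, so this linear system pins down $\y^\top$ uniquely, and solving it gives $\y^\top = \cc_{B'}^{'\top}A_{B'}^{'-1}$. Combining with the definition $\y^\top=\cc_B^{'\top}A_B^{'-1}$ produces exactly the claimed equality $\cc_{B'}^{'\top}A_{B'}^{'-1}=\cc_B^{'\top}A_B^{'-1}$.

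There is essentially no computational obstacle: the entire content is the reinterpretation of $S$ as the zero-reduced-cost set of the single multiplier vector $\y$, after which invertibility of $A_{B'}^{'}$ closes the argument. The one point deserving care is that $\y$ is defined from $B$ yet must be shown to coincide with the multipliers read off from $B'$; the crucial fact enabling this is that $B'\subseteq S$ forces $\y$ to satisfy the $|B'|$ equations $\y^\top A_{B'}^{'}=\cc_{B'}^{'\top}$ that, by themselves, determine the multiplier vector of $B'$. Note that no optimality or primal-dual feasibility of either basis is invoked---only that $B'$ lies in the zero-reduced-cost set of $B$---which is exactly the hypothesis under which the lemma will later be applied in the proof of Theorem~\ref{thm:lex-reg}.
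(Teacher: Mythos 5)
Your proof is correct and is essentially the paper's own argument: both collect the equalities $c'_j = \cc_B^{'\top}A_B^{'-1}\a'_j$ over $j\in B'\subseteq S$ into the matrix identity $\cc_{B'}^{'\top} = \cc_B^{'\top}A_B^{'-1}A'_{B'}$ and then invoke invertibility of $A'_{B'}$ to conclude. The dual-multiplier interpretation you add is just a reframing of the same computation.
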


\begin{proof}
We have $c'_j = \cc_B^{'\top} A_B^{'-1} \a'_j$ for all $j\in S$. Thus, $\cc_{B'}^{'\top} = \cc_B^{'\top} A_B^{'-1} A'_{B'}$, since $B'\subseteq S$, which can be written as $\cc_{B'}^{'\top} A_{B'}^{'-1}= \cc_B^{'\top} A_B^{'-1} $.
\end{proof}

\begin{proof}[Proof of Theorem~\ref{thm:lex-reg}.]
Let $j\in [n]$. Denote by $\overline\cc_j = (\overline c_{1j}, \overline c_{2j}, \ldots, \overline c_{mj})^\top$ the $j$-th reduced cost of \eqref{lin-lex-prog} with respect to $B^{(m)}$. We have $B^{(m)} \subseteq S^{(m+1)} \subseteq S^{(m)} \subseteq S^{(m-1)} \subseteq \cdots \subseteq S^{(l+1)}$, and thus  Lemma~\ref{lem:powerful} with 
\[
\cc'=\cc_{S^{(l)}}^{l}, \quad A'=A_{S^{(l)}}, \quad \boldsymbol{b}' = \boldsymbol{b}, \quad B=B^{(l)}, \quad S=S^{(l+1)}, \quad \text{and} \quad B'=B^{(m)}\, ,
\]
shows that $\cc_{B^{(m)}}^{l\top} A_{B^{(m)}}^{-1} =  \cc_{B^{(l)}}^{l\top} A_{B^{(l)}}^{-1}$. This implies that  $\overline c_{lj} = c_{lj} - \cc_{B^{(l)}}^{l\top} A_{B^{(l)}}^{-1}\a_j$ for all $l\in[m]$.

If $\overline c_{lj} = 0$ for all $l$, we have $\overline\cc_j\leqlex \zero$, as required. We assume thus that there is at least one $l$ for which this equality does not hold. Define $l^*$ as the smallest $l$ such that $\overline c_{lj} \neq 0$.

We prove by induction on $l$ that $j\in S^{(l)}$ for all $1\leq l \leq l^*$.
This is true by definition for $l=1$. Suppose now that it is true for $1 \leq l < l^*$. Since $\overline c_{lj} = c_{lj} - \cc_{B^{(l)}}^{l\top} A_{B^{(l)}}^{-1}\a_j$ and $\overline c_{lj} = 0$ (by $l<l^*$), we have $j\in S^{(l+1)}$, which concludes the induction.

We also have $\overline c_{l^*j} = c_{l^*j} - \cc_{B^{(l^*)}}^{{l^*}\top} A_{B^{({l^*})}}^{-1}\a_j$. This latter quantity is non-positive because $B^{(l^*)}$ is a primal-dual feasible basis of (\hyperref[Pell]{P$_{l^*}$}) and $j\in S^{(l^*)}$, as proved just before. Since $\overline c_{l^*j}\neq 0$, we have therefore $\overline c_{l^*j}<0$, as required.
\end{proof}

\begin{remark}\label{rem:reduced}
Denote by $\overline\cc_j$ the $j$-th reduced cost of \eqref{lin-lex-prog} with respect to $B^{(m)}$. As seen in the proof of Theorem~\ref{thm:lex-reg}, we have $\overline c_{lj} = c_{lj} - \cc_{B^{(l)}}^{l\top} A_{B^{(l)}}^{-1}\a_j$. This means that if we have access to the (row) vectors $\cc_{B^{(l)}}^{l\top} A_{B^{(l)}}^{-1}$ for every $l$, we can easily compute the reduced cost associated to any $j\in[n]$. Standard solvers usually give access for free to this vector $\cc_{B^{(l)}}^{l\top} A_{B^{(l)}}^{-1}$ once \eqref{Pell} has been solved to optimality. Indeed, these are the optimal dual solutions. Actually, we need slightly more than optimality since a primal-dual feasible basis is required, but most solvers provide this possibility. (The simplex algorithm terminates with such a basis.) This is used in our implementation to build efficiently the pricing problem.
\end{remark}

\begin{remark}
The practical computation of $S^{(l)}$ is tricky because it relies on the identification of components equal to $0$, and is therefore subject to numerical instability. This must be carefully addressed by performing all equality tests with a nonzero ``epsilon'' margin.
\end{remark}

\section{Solving the pricing problem}\label{sec:pricing}

The pricing problem, which corresponds to steps~\ref{pricing} and~\ref{pricing2} of Algorithm~\ref{algo}, consists in finding for each pilot $i$ feasible schedules $s\in \S$, if any, with lexicographically positive reduced costs $\overline \cc_{is}$. We formulate therefore the pricing problem for pilot $i$ as
\begin{equation}\label{eq:pricingpb}
\displaystyle \lexmax_{s\in \S}\limits \overline\cc_{is} \, .
\end{equation}

With the general notation of Section~\ref{subsec:prel}, we express the reduced cost $\overline \cc_{is}$ as $\cc_{is} - C_BA_B^{-1}\a_{is}$, where $B$ is a feasible basis of the linear relaxation of \eqref{main-problem}, and where $A$ is the constraint matrix of this problem (and $\a_{is}$ is its column associated to the pair $(i,s)$). Hence, this reduced cost can be written under the form
\begin{equation}
    \label{eq:pricingSubproblemObjective}
\overline \cc_{is} = 
\underbrace{
\begin{bmatrix}
0 \\[-1ex]
\vdots
\\
0
\\[-1ex]
c_{is}
\\
0
\\[-1ex]
\vdots
\\
0
\end{bmatrix}
}_{\substack{\text{specific to the} \\ \text{pricing problem} \\ \text{of pilot $i$;} \\ \text{only objective $i$} \\ \text{is non-zero} } }
-
\underbrace{
\begin{bmatrix}
\lambda_{1i} \\[-1ex]
\vdots \\[-1ex]
\lambda_{mi}
\end{bmatrix}
}_{\substack{\text{not depending on $s$}}}
-
\underbrace{
\sum_{p\in s}
\begin{bmatrix}
\mu_{1p} \\[-1ex]
\vdots \\[-1ex]
 \mu_{mp}
\end{bmatrix}
}_{
\substack{
\text{shared by the pricing} \\ \text{problems of all pilots}
}
}
\, ,
\end{equation}
where the first vector in the right-hand term is $\cc_{is}$ and has its only non-zero entry located on the $i$-th row. The real numbers $\lambda_{li}$ and $\mu_{lp}$ are read from $C_BA_B^{-1}\a_{is}$. (Note that the columns of $C_BA_B^{-1}$ are indexed by $I \cup \P$ and its rows by $I$.)  They can actually be obtained from optimal solutions of some dual problems, but we avoid referring to dual notions and build the pricing problem from the reduced cost expression.

Pricing problems of column generation approaches for vehicle and crew scheduling problems are typically modeled as resource constrained path problems, and solved with adequate enumeration algorithms.
The main specificity of our pricing problem is that the objective is lexicographic. 
Section~\ref{sec:RCLPP-Lex} introduces the lexicographic resource-constrained longest path problem, and shows that the usual enumeration algorithms for resource-constrained shortest path problems can be extended to that setting.
Section~\ref{sec:modelingAsRCLPP-Lex} then models the pricing problem~\eqref{eq:pricingpb} within this framework.

The second specificity of our problem comes from the fact that the $i$-th lexicographic objective of the master problem involves only variables corresponding to pilot $i$. In an interpretation of the problem~\eqref{main-problem} as a Dantzig--Wolfe decomposition, the $i$-th objective involves only variables from the $i$-th block. The consequence, which is highlighted in equation~\eqref{eq:pricingSubproblemObjective}, 
is that the objectives of the pricing problem of pilot $i$ are identical to those of all other pilots (the $\lambda_{li}$ are constant with respect to the pricing problem), except for the $i$-th one.
Section~\ref{sec:simultaneousPricing} shows how a preliminary call to the lexicographic longest path algorithm---which we call the ``reduction'' trick---can exploit this property to solve often the pricing problem for many pilots simultaneously.
We will see in the numerical experiments that this trick strongly improves the practical performance of the algorithms.

The rest of the section is technical and can be skipped by a reader that is not interested in the details of the pricing algorithm implementation.


\subsection{The lexicographic resource-constrained longest path problem (RCLPP-Lex)} \label{sec:RCLPP-Lex}

\subsubsection*{The problem} We are given a directed graph $D=(V,A)$ with two particular vertices $o$ and $d$, and a partial ordered set $(R,\preceq)$ of \emph{resources}, which is supposed to have a unique maximal element, denoted by $\uun$. Each arc $a$ is equipped with a non-decreasing map $f_a\colon R\rightarrow R$. Moreover, we are given a non-increasing map $c\colon R\rightarrow \bar\R^m$. 
The \emph{resource of a path} $P$, denoted by $r_P$, is defined inductively by $r_P=f_a(r_{P\setminus a})$, where $a$ is the last arc of $P$. The map $f_a$ has to be thought as describing the way the resource of a path is modified when an arc $a$ is added to the path. The resource of a path reduced to a single vertex depends on that vertex and is given as an input of the problem. A path $P$ is \emph{feasible} if $r_P\neq\uun$. We define the lexicographic resource-constrained longest-path problem (RCLPP-Lex) as the problem of computing a feasible $o$-$d$ path $P$ with maximal $c(r_P)$.

A natural way to solve this problem consists in enumerating feasible $o$-$v$ paths and in discarding as many of them as possible by dominance ($f$ is non-decreasing). This might be actually time-consuming, but in many cases longest-path (or shortest-path) problems enjoy extra properties on which algorithms can rely to improve their computational times. We describe now such a property. As we will see, the pricing problem enjoys this property. 

\subsubsection*{When bounds can be used} Suppose that we are given a non-decreasing map $g_a\colon R\rightarrow R$ for each arc $a$ (``reverse-extension'' map) and another component-wise non-decreasing map $h\colon R\times R\rightarrow R$ (``merge'' map). For a path $Q$, we define inductively $r'_Q$ by $r'_Q=g_a(r'_{Q\setminus a})$, where $a$ is the first arc of $Q$. 
When $Q$ is reduced to a single vertex, the definition of $r'_Q$ depends on the vertex and the nature of the problem. Suppose moreover that for every vertex $v$, every $o$-$v$ path $P$ and every $v$-$d$ path $Q$, we have $h(r_P,r'_Q) \preceq r_{P+Q}$. Then, any $b_v$ satisfying $b_v \preceq r'_Q$ for every $v$-$d$ path $Q$ is a bound that can be used to discard $o$-$v$ paths: indeed, if an $o$-$v$ path $P$ has $c(h(r_P,b_v))$ smaller than the current best solution, we know that $P$ cannot be part of an optimal solution.

A {\em meet-semilattice} is a partial ordered set in which every finite non-empty subset $X$ has a (unique) greatest lower bound $\wedge X$ (called {\em meet}). When $(R,\preceq)$ is a meet-semilattice and $D$ is acyclic, any solution $(b_v)_{v\in V}$ of the following dynamic programming equation provides bounds for the problem:
$$ \left\{\begin{array}{ll}
 b_d=  r'_{\text{path reduced to $d$}}  \\[2ex]
 b_v=\bigmeet\limits_{(v,w) \in \delta^{+}(v)}g_{(v,w)}(b_w) \, .
\end{array}\right. $$
(For a proof of this fact, see \citep{parmentierAlgorithmsNonlinearStochastic2019}, or \citep{poulletShiftPlanningDelay2020} where a notation closer to that of the present paper is used.)

\subsubsection*{An enumeration algorithm for acyclic graphs, when bounds are available} Assuming that the bounds $b_v$ are available (e.g., they have been pre-computed), we propose the following algorithm to solve RCLPP-Lex when the graph $D$ is acyclic. In our case, this assumption on the graph will be satisfied. As far as we know, RCLPP-Lex, or close problems, have not been studied in the literature. Nevertheless, the algorithm we propose follows a classical dominance/bound scheme~\citep{irnich2005shortest}.  

\begin{algorithm}
	\caption{Algorithm for RCLPP-Lex on DAGs when bounds are available}\label{algo:shortestpath}
	\begin{algorithmic}[1]
		\Require an acyclic digraph $D=(V,A)$, bounds $(b_v)_{v \in V}$;
		\State $\text{\texttt{current\_best\_cost}} \leftarrow (-\infty,\ldots,-\infty)$, $\text{\texttt{current\_best\_solution}} \leftarrow \texttt{undefined}$, and $L_v \leftarrow \varnothing$ for all $v \in V$;
		\State $L_o \leftarrow \{\text{path reduced to $o$}\}$;\label{algolp:initial}
        \While{there exists $u$ such that $L_{u} \neq \varnothing$}
         \State $P \leftarrow $ a path in $L_{u}$;\label{algolp:key}
         \State $L_{u} \leftarrow L_u \setminus \{P\}$;
         \ForAll{$a \in \delta^{+}(u)$}\label{algolp:forloop}
         \If{$f_{a}(r_P)\neq \hat{\boldsymbol{1}}$}
         \State $Q \leftarrow P+a$;
         \State $w \leftarrow $ last vertex of $Q$;
         \If{$w=d$} 
         \If{$c(r_Q) \geqslex \text{\texttt{current\_best\_cost}}$}\label{algolp:iftest}
         \State $\text{\texttt{current\_best\_cost}} \leftarrow c(r_Q)$; \label{algolp:bestvalue}
         \State $\text{\texttt{current\_best\_solution}} \leftarrow Q$; \label{algolp:endif}
         \EndIf
         \Else
         \If{($c(h(r_Q,b_w)) \geqslex \text{\texttt{current\_best\_cost}}$) and ($r_Q \not\succcurlyeq r_{Q'}$ for every $Q'\in L_w$)} \label{algolp:elimination}
         \State remove from $L_w$ all paths $Q'$ such that $r_Q \preceq r_{Q'}$; \label{algolp:dominance}
         \State $L_w \leftarrow L_w \cup \{Q\}$;
         \EndIf
         \EndIf
         \EndIf
         \EndFor
         \EndWhile
         \If{$\text{\texttt{current\_best\_solution}}\neq \texttt{undefined}$} \label{algolp:return1}
         \State\Return $\text{\texttt{current\_best\_solution}}$;
         \Else
         \State\Return ``There is no feasible path.''\label{algolp:return2}
         \EndIf
	\end{algorithmic}
\end{algorithm}

The algorithm consists in enumerating implicitly all possible paths originating at $o$: when an $o$-$u$ path is considered, then all possible ways of extending this path with an arc starting at $u$ are tried. As soon as a first feasible $o$-$d$ path has been found, $\text{\texttt{current\_best\_solution}}$ is a best solution identified so far, and $\text{\texttt{current\_best\_cost}} \in \mathbb{R}^{m}$ is its cost. Initially, $\text{\texttt{current\_best\_cost}}$ is equal to $(-\infty,\ldots,-\infty)$, with the convention that it is the objective value of a non-feasible solution, and therefore this variable is always a lower bound on the optimal value.

As discussed above, this enumeration is implicit because the bound is used to discard $o$-$u$ paths that cannot be part of an optimal solution. The list $L_u$ contains only feasible $o$-$u$ paths that have successfully passed this test, which is the first inequality of step~\ref{algolp:elimination}.

In the version shown in Algorithm~\ref{algo:shortestpath}, the list $L_u$ is also cleaned of all ``dominated'' $o$-$u$ paths. This is the second test at step~\ref{algolp:elimination}. More precisely, at any moment, no two paths $Q$ and $Q'$ with $r_Q \preceq r_{Q'}$ are present simultaneously in $L_u$. Indeed, removing such a path $Q'$ will not affect the correctness of the algorithm. Yet, when several optimal paths are sought, this might lead to inexact solutions, and therefore this property of the list $L_u$ must not be kept: dominated paths must be allowed and the second test at step~\ref{algolp:elimination} and step~\ref{algolp:dominance} must be omitted. This is actually the case in our experiments because we add several columns at each iteration of the column generation (steps~\ref{pricing} and~\ref{pricing2} of Algorithm~\ref{algo}).

\subsubsection*{Possible improvements}
There are several places in Algorithm~\ref{algo:shortestpath} where computational time can be spared.

A preliminary test for feasibility, using $b_o$, can be performed at step~\ref{algolp:initial}: if $h(r_{\{o\}},b_o)$ is equal to $(-\infty, \ldots,-\infty)$ (where $\{o\}$ stands for the path reduced to $o$), then it is not possible to get a better cost (and in general this will coincide with the absence of any feasible solution).

A topological order on the vertices can be used: at step~\ref{algolp:forloop}, choose $a$ so that its head is the highest according to this order. Indeed, $o$-$d$ paths will be computed earlier, which increases the chances of improving the current best solution, and hence the number of discarded paths.

A ``key'' attached to the paths can be used to prioritize the order in which the elements in $L_u$ must be considered: define it as $\operatorname{key}(P)\coloneqq h(r_P,b_v)$, where $v$ is the last vertex of $P$; then
    \begin{itemize}
        \item at line~\ref{algolp:key}, choose $P\in\bigcup_{u\in V}L_u$ maximizing $\operatorname{key}(P)$. Indeed, this choice increases the chances of constructing paths with larger costs.
        \item before entering the {\bf for} loop of line~\ref{algolp:forloop}, test whether $\operatorname{key}(P)\leqlex \text{\texttt{current\_best\_cost}}$ for the $P$ maximizing $\operatorname{key}(P)$, in which case we can stop.
    \end{itemize}

These improvements have been implemented in the code used in the experiments.

\subsubsection*{Solving variants}
The variant of the pricing problem that is solved in step~\ref{pricing} of Algorithm~\ref{algo} consists in generating a number of paths, say $N$ (if any), with the best lexicographically positive reduced costs. Algorithm~\ref{algo:shortestpath} can be adapted to solve this variant, which we call ``$N$-best-paths.'' To do so, we use a plural $\text{\texttt{current\_best\_solutions}}$ to denote the list of the best feasible $o$-$d$ paths identified so far. The smallest cost of these paths is denoted by $\text{\texttt{current\_smallest\_cost}} \in \R^m$ (initially equals $(+\infty,\ldots,+\infty)$). Then, we replace steps~\ref{algolp:iftest} to~\ref{algolp:endif} by

\begin{algorithm}[H]
\begin{algorithmic}
\If{$|\text{\texttt{current\_best\_solutions}}| < N$}
\State $\text{\texttt{current\_best\_solutions}} \leftarrow \text{\texttt{current\_best\_solutions}} \cup \{Q\}$;
\If{$c(r_Q) \leqslex  \text{\texttt{current\_smallest\_cost}}$}
\State $\text{\texttt{current\_smallest\_cost}}\leftarrow c(r_Q)$;
\EndIf
\Else
\If{$c(r_Q) \geqslex \text{\texttt{current\_smallest\_cost}}$}
\State Delete from $\text{\texttt{current\_best\_solutions}}$ a path with cost $\text{\texttt{current\_smallest\_cost}}$;
\State $\text{\texttt{current\_best\_solutions}} \leftarrow \text{\texttt{current\_best\_solutions}} \cup \{Q\}$;
\State $\text{\texttt{current\_smallest\_cost}}\leftarrow \min\{c(r_P)\colon P\in \text{\texttt{current\_best\_solutions}}\}$;
\EndIf
\EndIf
\end{algorithmic}
\end{algorithm}

\noindent and step~\ref{algolp:elimination} by

\begin{algorithm}[H]
\begin{algorithmic}
\If{$|\text{\texttt{current\_best\_solutions}}|<N$ or $c(h(r_Q, b_w)) \geqslex \text{\texttt{current\_smallest\_cost}} $}
\EndIf
\end{algorithmic}
\end{algorithm}

\noindent Finally, we omit the dominance check of step~\ref{algolp:dominance}, and adapt steps~\ref{algolp:return1} to~\ref{algolp:return2} for this variant. 

Similarly, Algorithm~\ref{algo:shortestpath} can be adapted to solve the variant of the pricing problem of step~\ref{pricing2} of Algorithm~\ref{algo}. For this variant, we also use a list $\text{\texttt{current\_best\_solutions}}$ of selected $o$-$d$ paths, we replace step~\ref{algolp:iftest} by

\begin{algorithm}[H]
\begin{algorithmic}
\If{$c(r_Q) \geqlex \eell-\u$}
\EndIf
\end{algorithmic}
\end{algorithm}

\noindent and step~\ref{algolp:elimination} by

\begin{algorithm}[H]
\begin{algorithmic}
\If{$h(r_Q, b_w) \lexgeq \eell - \u$}
\EndIf
\end{algorithmic}
\end{algorithm}

\noindent we omit steps~\ref{algolp:bestvalue} and \ref{algolp:dominance}, and we  adapt steps~\ref{algolp:return1} to~\ref{algolp:return2} for this variant.

\subsection{Modeling and solving the pricing problem as an RCLPP-Lex}
\label{sec:modelingAsRCLPP-Lex}

\subsubsection*{Modeling as an RCLPP-Lex} We explain now how to model the pricing problem with the rules given in Section~\ref{subsec:pb} as an RCLPP-Lex (see definition in Section~\ref{sec:RCLPP-Lex}). To each pilot $i$, we associate a directed acyclic graph $D_i=(V_i,A_i)$. The set of vertices $V_i$ is defined as $\P \cup \{o,d\}$, where the origin $o$ and the destination $d$ represent respectively the beginning and the ending of the month. The arc set $A_i$ is formed by arcs $(o,p)$ and $(p,d)$ for each $p \in \P$, and by arcs $(p,p')$ for every pairings $p,p'\in \P$ such that $p'$ can be operated after $p$. 

 By construction of $D_i$, each feasible schedule for the pilot $i$ can be represented as an $o$-$d$ path in this graph. However, the opposite is not true since an $o$-$d$ path may not satisfy all the rules for a feasible schedule. One way to take into account these rules is to model them using resources in the graph, as we explain now. We omit the rule about the $55$ working hours for sake of readability. The ideas for taking it into account are not essentially different from those explained for the other rules; yet they are quite technical. Nevertheless, it has been taken into account in the experiments. The rules we deal with are thus (Section~\ref{subsec:pb}): the number of days on, the seven consecutive days off, and the number of flight hours. Note that the conflicts between the pairings have been encoded in the construction of the arcs of $D_i$.
 
We define the set of resources $R=(\Z_+ \times \{0,1\} \times \Z_+ \times \R^{m}) \cup \{\uun\}$. A resource $r$ distinct from $\uun$ is thus a four-tuple $(r^o,r^b,r^f,r^c)$ where the first component $r^o$ corresponds to the number of days on, the second component $r^b$ is binary and indicates whether there are at least seven consecutive days off, the third component $r^f$ corresponds to the number of flight hours, and the fourth component $r^c$ is used to compute $\overline\cc_{is}$. For such elements, we define the partial order $\preceq$ on $R$ by 
\[
(r_1^o,r_1^b,r_1^f,r_1^c)\preceq (r_2^o,r_2^b,r_2^f,r_2^c) \quad \text{if} \quad r_1^o\leq r_2^o, \quad r_1^b\geq r_2^b, \quad r_1^f\leq r_2^f, \quad \text{and}\quad r_1^c\geqlex r_2^c \, .
\]
The element $\uun$ is then defined as being greater than any other element in $R$. Note that $(R,\preceq)$ is a meet-semilattice. The resource $r_P$ of a path $P$ reduced to a single vertex $v \in V_i$ is set to $(0,0,0,\zero)$ if $v=o$ and $\uun$ otherwise. Consider for $a\in A_i$, $r^o\in\Z_+$, $r^b\in \{0,1\}$, $r^f\in\Z_+$, and $r^c\in \R^{m}$ the following functions:
\begin{flalign*}
f_a^o(r^o) &= r^o + (\text{number of days on in the head pairing of $a$}),\\
f_a^b(r^b) &= \left\{\begin{array}{ll} 1 & \text{if there are at least seven consecutive days off} \\ &  \text{between the tail pairing and the head pairing of $a$,} \\[1ex] r^b & \text{otherwise,}\end{array}\right. \\
f_a^f(r^f) &= r^f + (\text{number of flight hours in the head pairing of $a$})\, ,\\
f_a^c(r^c) &= r^c + (\text{cost associated to the head pairing of $a$})\, .
\end{flalign*}
If the head of $a$ is not $d$, the cost associated to the head pairing $p$ is set to \[(\mu_{1p},\ldots,\mu_{(i-1)p},g_{ip}+\mu_{ip},\mu_{(i+1)p},\ldots,\mu_{mp})^\top\, .\]
In the above definitions, $d$ is treated as a fictitious pairing with the following characteristics: its number of days on is $0$ (the starting and ending times are equal to the end of the month), its number of flight hours is $0$, and its cost is $(\lambda_{1i},\ldots,\lambda_{mi})^{\top}$. 

We can now define $f_a\colon R\rightarrow R$ for every arc $a\in A_i$. For $r\in R$, we set
\[
f_a(r) = \left\{\begin{array}{ll}
\rho\big(f_a^o(r^o),f_a^b(r^b),f_a^f(r^f),f_a^c(r^c)\big) & \text{if $r\neq\uun$ and ($\text{head of $a$}\neq d$ or $f_a^b(r^b)=1$),}\\[0.5ex]
\uun & \text{otherwise,} 
\end{array}\right.
\]
where
\[
\rho(x,y,z,t) = \left\{\begin{array}{ll}
(x,y,z,t) & \text{if $x \leq 17$ and $z \leq 85$,} \\[0.5ex]
\uun & \text{otherwise.} \\
\end{array}\right.
\]
The map $f_a$ is non-decreasing.

We define $c\colon R \rightarrow \bar\R^m$ by
\[
c(r) = \left\{ \begin{array}{ll} 
r^c & \text{if $r\neq \uun$,}\\
(-\infty,\ldots,-\infty)^\top & \text{otherwise.}
\end{array}\right.
\]
It is a non-increasing map.

In this way, the problem RCLPP-Lex we get models exactly the problem of finding a feasible schedule for pilot $i$ with a lexicographically maximal reduced cost. Indeed, by definition of $f_a$, the resource $r_P$ of an $o$-$d$ path $P$ in $D_i$ (as defined in Section~\ref{sec:RCLPP-Lex}) is equal to $\uun$ precisely when the path $P$ does not correspond to a feasible schedule (one of the three rules is not satisfied). Moreover, the cost of such a path is equal to $\overline\cc_{is}$ when the path encodes the schedule $s$.

 \subsubsection*{Bounds can be used} The algorithm proposed in Section~\ref{sec:RCLPP-Lex} to solve RCLPP-Lex requires extra properties to be applied, namely that $D_i$ be acyclic, and that ``reverse-extension'' and ``merge'' maps be available. This is the case. We define $g_a\colon R\rightarrow R$ for every arc $a\in A_i$. It is exactly the same definition as $f_a$, except that the vertex $d$ is replaced by the vertex $o$ in the feasibility test, and  when a path $Q$ is reduced to a single vertex, we set $r'_Q$ to $(0,0,0,\zero)$ if that vertex is $d$ and $\uun$ otherwise. For sake of completeness, we write the full definition. We define for every  $r\in R$

\[
g_a(r) = \left\{\begin{array}{ll}
\rho\big(f_a^o(r^o),f_a^b(r^b),f_a^f(r^f),f_a^c(r^c)\big) & \text{if $r\neq\uun$ and ($\text{tail of $a$}\neq o$ or $f_a^b(r^b)=1$),}\\[0.5ex]
\uun & \text{otherwise.} 
\end{array}\right.
\]
The map $g$ is non-decreasing.

 With $r_P$ and $r'_Q$ defined as in Section~\ref{sec:RCLPP-Lex}, we have the following lemma, where $h\colon R\times R \rightarrow R$ is defined by 
\[
h(r,r') = \left\{\begin{array}{ll}
\rho \big(r^o+r'^{o},1,r^f+r'^{f},r^c+r'^{c} \big) & \text{if $r \neq \uun$ and $r'\neq \uun$ and $\max(r^b, r'^{b}) = 1$} \\ 
 \uun & \text{otherwise.}
\end{array}
\right.
\]

\begin{lemma}
For every $o$-$v$ path $P$ and every $v$-$d$ path $Q$, we have $h(r_P,r'_Q) \preceq r_{P+Q}$.
\end{lemma}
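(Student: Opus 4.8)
The plan is to split on whether the concatenated path $P+Q$ is feasible, that is, on whether $r_{P+Q}=\uun$ or not. If $r_{P+Q}=\uun$, the inequality is immediate: $\uun$ is the greatest element of $(R,\preceq)$, so $h(r_P,r'_Q)\preceq\uun=r_{P+Q}$ whatever the left-hand side is. All the content is in the feasible case $r_{P+Q}\neq\uun$, where I would in fact establish the stronger equality $h(r_P,r'_Q)=r_{P+Q}$.

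First I would record an additivity fact for the three additive coordinates. Writing $P=(o,\dots,v)$ and $Q=(v,\dots,d)$, the vertex $v$ is the head of the last arc of $P$, hence contributes to $r_P$, and is the head of no arc of $Q$, hence does not contribute to $r'_Q$; every other pairing of $P+Q$ is the head of exactly one arc, lying in $P$ or in $Q$. Since $f_a^o$, $f_a^f$ and $f_a^c$ (and the identical maps used inside $g_a$) merely add the head pairing's contribution, this disjoint decomposition of the arc set gives $r_P^o+{r'_Q}^{o}=r_{P+Q}^o$, $r_P^f+{r'_Q}^{f}=r_{P+Q}^f$, and $r_P^c+{r'_Q}^{c}=r_{P+Q}^c$, the fictitious pairing $d$ (bearing the $\lambda$-cost and zero days and hours) being counted exactly once, on the $Q$-side.

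Next I would verify that neither $r_P$ nor $r'_Q$ equals $\uun$, so that the first guard in the definition of $h$ is passed. Because numbers of days on and of flight hours are nonnegative, the partial sums along $P+Q$ are monotone; feasibility bounds the totals by $17$ and $85$, so every partial sum respects these bounds and no intermediate application of $\rho$ yields $\uun$. The only other route to $\uun$ is the seven-days-off guard, which $f_a$ triggers solely on an arc entering $d$ and $g_a$ solely on an arc leaving $o$; for $o\neq v\neq d$ neither kind of arc appears in $P$ or $Q$, so $r_P,r'_Q\neq\uun$. The boundary cases $v=o$ and $v=d$ I would dispatch separately: there one of $P,Q$ is the trivial path, and the guard forces the $b$-coordinate of the nontrivial resource to equal $1$, which is exactly what the next step needs.

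The crux --- and the step I expect to be the only real obstacle --- is the binary seven-consecutive-days-off coordinate, since $h$ sets it to $1$ only under the side condition $\max(r^{b},{r'}^{b})=1$ and otherwise returns $\uun$. The key geometric observation is that a stretch of seven consecutive days off cannot straddle the pairing $v$, because the days of $v$ are days on; any such stretch therefore lies inside a single inter-pairing gap, and each gap is inspected by exactly one arc, belonging either to $P$ or to $Q$. Tracking the accumulation of $f_a^b$ shows that $r_P^b=1$ exactly when some arc of $P$ records a seven-day gap and ${r'_Q}^{b}=1$ exactly when some arc of $Q$ does; as the arc sets of $P$ and $Q$ partition that of $P+Q$, this yields $\max(r_P^b,{r'_Q}^{b})=r_{P+Q}^b$. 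Feasibility of $P+Q$ forces $r_{P+Q}^b=1$, whence $\max(r_P^b,{r'_Q}^{b})=1$ and the side condition holds. Combining this with the additivity and the bounds already checked, $h(r_P,r'_Q)=\rho\big(r_{P+Q}^o,1,r_{P+Q}^f,r_{P+Q}^c\big)=r_{P+Q}$, completing the feasible case and hence the lemma.
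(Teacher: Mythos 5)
Your proof is correct and follows essentially the same route as the paper's: split on whether $r_{P+Q}=\uun$, use the disjoint decomposition of the arcs of $P+Q$ into those of $P$ and $Q$ to get componentwise additivity (with $\max$ for the $b$-coordinate), deduce $\max(r_P^b,{r'_Q}^b)=1$ from the guard on the arc entering $d$, check the $17$/$85$ bounds, and conclude the stronger equality $h(r_P,r'_Q)=r_{P+Q}$. The only cosmetic difference is that the paper gets $r_P\neq\uun$ and $r'_Q\neq\uun$ immediately from the absorbing property $f_a(\uun)=g_a(\uun)=\uun$, whereas you re-derive it by inspecting the guards and monotone partial sums, which is equivalent.
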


\begin{proof}
If $r_{P+Q}=\uun$, the inequality is satisfied. We can thus assume that $r_{P+Q} \neq \uun$. By definition of $f_a$, we have $f_a(\uun)=\uun$, and hence $r_P\neq \uun$. By the definitions of $f_a$ and $g_a$, we have $r'_{P+Q}=r_{P+Q}$ and $g_a(\uun)=\uun$, and thus similarly $r'_Q\neq\uun$.

Since $r_{P+Q}$ is different from $\uun$, it takes the form of a four-tuple. It is straightforward to check that the definition of $f_a$ and $g_a$ ensures that we actually have
$r_{P+Q} = (r_P^o+{r'_Q}^{o},\max(r_P^b,{r'_Q}^b),r_P^f+{r'_Q}^{f},r_P^c + {r'_Q}^c)$ (the maps $g_a$ counts the resource along an arc in the same way as $f_a$). The fact that $r_{P+Q}\neq\uun$ implies that for the last arc $a$ of $P+Q$ we have $f_a^b(r_{(P+Q)\setminus a}^b)=1$. It means that there is at least one arc $a'$ of $P+Q$ for which there are at least seven consecutive days off between the tail pairing and the head pairing of $a'$. We have thus $\max(r_P^b,{r'_Q}^b)=1$. Hence, $h(r_P, r'_Q) = \rho(r_P^o+{r'_Q}^{o},1,r_P^f+{r'_Q}^{f},r_P^c + {r'_Q}^c)$. An argument similar as the one for the seven consecutive days off shows that $r_P^o+{r'_Q}^{o}\leq 17$ and $r_P^f+{r'_Q}^{f}\leq 85$. Therefore, $h(r_{P+Q})=r_{P+Q}$, which implies the desired result.
\end{proof}

Since $h$ is componentwise non-decreasing, this lemma shows that Algorithm~\ref{algo:shortestpath} can be applied, with the bounds $b_v$ being computed in a pre-processing step relying on dynamic programming, as explained in Section~\ref{sec:RCLPP-Lex}.

\begin{remark}
The set of resources $R$ is of dimension $4$, which starts to be rather large for dominance. The number of non-dominated paths tends to be large, which means that few paths are cut by dominance, and large sets of non-dominated paths must be stored. In our situation, the CPU time lost performing tests with these sets will be in general larger than the one gained by discarding paths. As mentioned earlier, we are not using dominance for the  ``N-best-paths'' variant (since otherwise the correctness of the algorithm is not ensured); yet, even for the original version of the problem RCLPP-Lex, where a single optimal path is sought, this indicates that dominance should be avoided when solving the pricing problem.
\end{remark}

\subsection{The ``reduction'' trick}\label{subsec:Imp-RCLPP-Lex}
\label{sec:simultaneousPricing}

As emphasized by the comments in~\eqref{eq:pricingSubproblemObjective}, the only objective in $\overline \cc_{is}$ that depends on $i$ and that must decomposed along the paths in the pricing problem is the $i$-th objective. We can exploit this structure as follows.
Consider the following minimization problem:
\begin{equation}\label{genericpbj}
\lexmin_{s\in \S}\limits \sum_{p\in s}\begin{bmatrix}
\mu_{1p}	\\
\vdots\\
\mu_{(m-1)p}
\end{bmatrix}
\, .
\end{equation}
Consider an arbitrary positive integer $K$. Denote by $L$ the set of the best $K$ feasible solutions of problem~\eqref{genericpbj}. Moreover, we define $i^*$ as the smallest index $i$ such that at least two solutions in $L$ provide distinct values for $\sum_{p\in s}\mu_{ip}$. If such an index does not exist, we set $i^*\coloneqq m+1$.

\begin{proposition}\label{prop:improve-path}
Every optimal solution of problem~\eqref{eq:pricingpb} for $i>i^*$ is contained in $L$.
\end{proposition}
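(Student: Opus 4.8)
The plan is to compare the pricing objective of a pilot $i>i^*$ with the objective of~\eqref{genericpbj}, exploiting that their leading coordinates coincide. Write $\phi(s)\coloneqq\big(\sum_{p\in s}\mu_{1p},\ldots,\sum_{p\in s}\mu_{(m-1)p}\big)$ for the objective vector of~\eqref{genericpbj}, so that $L$ consists of the $K$ feasible schedules with the lexicographically smallest $\phi(s)$. From the reduced-cost expression~\eqref{eq:pricingSubproblemObjective}, the $l$-th coordinate of $\overline\cc_{is}$ equals $-\lambda_{li}-\sum_{p\in s}\mu_{lp}$ for every $l<i$ (the pilot-specific term $c_{is}$ sits only on coordinate $i$). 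Hence maximizing $\overline\cc_{is}$ lexicographically forces, as its top-priority requirement, the lexicographic minimization of $\big(\sum_{p\in s}\mu_{1p},\ldots,\sum_{p\in s}\mu_{(i-1)p}\big)$, i.e.\ of the first $i-1$ coordinates of $\phi$. So the first thing I would record is: any optimal solution $s^*$ of~\eqref{eq:pricingpb} for pilot $i$ attains the global lexicographic minimum of the length-$(i-1)$ prefix of $\phi$; and since $i>i^*$ means $i-1\geq i^*$, it attains in particular the global lexicographic minimum of the length-$i^*$ prefix.

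Next I would pin down that length-$i^*$ lexmin prefix in terms of $L$. By the very definition of $i^*$, all schedules in $L$ share a common value on coordinates $1,\ldots,i^*-1$; since the lexicographically smallest schedule of~\eqref{genericpbj} lies in $L$, this common value is the global lexmin of the length-$(i^*-1)$ prefix. I would then argue that the smallest $i^*$-th coordinate occurring among the schedules of $L$ is already the global minimum of $\sum_{p\in s}\mu_{i^* p}$ over all feasible $s$ sharing that prefix: if some feasible $s'$ had a strictly smaller $i^*$-th coordinate, then $s'$ would agree with the schedules of $L$ on coordinates $1,\ldots,i^*-1$ and improve on coordinate $i^*$, hence $\phi(s')\lexl\phi(s)$ for some $s\in L$; as $L$ collects the $K$ smallest schedules, $s'$ would itself belong to $L$, contradicting the minimality of that smallest $i^*$-th coordinate. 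Thus the length-$i^*$ lexmin prefix equals $(v_1,\ldots,v_{i^*-1},V)$, where $V$ is the least $i^*$-th coordinate seen in $L$, and it is attained inside $L$.

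Finally I would close the loop. The optimal pricing schedule $s^*$ attains this length-$i^*$ lexmin prefix, so it agrees with every schedule of $L$ on coordinates $1,\ldots,i^*-1$ and has $i^*$-th coordinate equal to $V$. By the defining property of $i^*$ there are at least two schedules in $L$ with distinct $i^*$-th coordinates, so some $s\in L$ has $i^*$-th coordinate strictly larger than $V$; as $s^*$ and $s$ first differ on coordinate $i^*$, this gives $\phi(s^*)\lexl\phi(s)$, and since $L$ is the set of the $K$ lexicographically smallest schedules this forces $s^*\in L$. The step I expect to be the main obstacle is the $L$-membership argument of the previous paragraph, i.e.\ making rigorous the ``downward-closed'' property that a feasible schedule lexicographically smaller than some member of $L$ must itself lie in $L$; this is where the role of $K$ and any ties in the values of $\phi$ have to be handled carefully (one must ensure $L$ is taken to contain all schedules tied at its boundary value, or work directly with the attained values of $\phi$ rather than with individual schedules). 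The remaining manipulations---reading off coordinates from~\eqref{eq:pricingSubproblemObjective} and the lexicographic prefix comparisons---are routine.
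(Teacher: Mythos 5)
Your proof is correct and follows essentially the same route as the paper's: first reduce optimality in the pricing problem~\eqref{eq:pricingpb} for pilot $i$ to lexicographic minimization of the length-$(i-1)$ prefix of the objective of~\eqref{genericpbj} (the constants $\lambda_{li}$ and the pilot-specific coordinate $i$ being irrelevant to that prefix), then use the fact that two members of $L$ differ at coordinate $i^*\leq i-1$ to force every prefix-minimizer into $L$. The only difference is one of detail: the paper compresses your second and third paragraphs (the common prefix of $L$, the boundary value $V$, and the downward-closure property of a $K$-best set) into a single asserted sentence, so your elaboration---including the caveat about ties at the boundary of $L$---is a faithful filling-in of that step rather than a different argument.
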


This proposition can be used to shorten in many cases the computational time for solving problem~\eqref{eq:pricingpb}. The following way has been implemented.  Determine $L$ with the adaptation of Algorithm~\ref{algo:shortestpath} for solving the variant ``$N$-best-paths'' (with $N\coloneqq K$) and compute $i^*$. If $i^*\leq m-1$, we simply scan $L$ to find the optimal solutions of problem~\eqref{eq:pricingpb} for $i>i^*$, and we use only Algorithm~\ref{algo:shortestpath} for $i\leq i^*$. If $i^* \geq m$, then we cannot not conclude anything, and we solve problem~\eqref{eq:pricingpb} for each $i$ with Algorithm~\ref{algo:shortestpath}.

We call it the ``reduction'' trick.

\begin{proof}[Proof of Proposition~\ref{prop:improve-path}.]
Let $s^*$ be an optimal solution of problem~\eqref{eq:pricingpb} for $i>i^*$. Then $s^*$ is optimal for
\begin{equation*}
\lexmax_{s\in \S}\limits \begin{bmatrix}
&-\lambda_{1i}-\sum_{p\in s}\mu_{1p}	\\
&\vdots\\
c_{is}&-\lambda_{ii}-\sum_{p\in s}\mu_{ip}	\\
&\vdots\\
&-\lambda_{mi}-\sum_{p\in s}\mu_{mp}
\end{bmatrix}\, .
\end{equation*}
By definition of the lexicographic optimization, $s^*$ is also an optimal solution to
\begin{equation*}
\lexmax_{s\in \S}\limits \begin{bmatrix}
&-\lambda_{1i}-\sum_{p\in s}\mu_{1p}	\\
&\vdots\\
&-\lambda_{(i-1)i}-\sum_{p\in s}\mu_{(i-1)p}	\\
\end{bmatrix}=\begin{bmatrix}
-\lambda_{1i}	\\
\vdots\\
-\lambda_{(i-1)i}	\\
\end{bmatrix}+ \lexmax_{s\in \S}\limits \begin{bmatrix}
-\sum_{p\in s}\mu_{1p}	\\
\vdots\\
-\sum_{p\in s}\mu_{(i-1)p}	\\
\end{bmatrix} \, .
\end{equation*}
Therefore, $s^*$ is an optimal solution to
\begin{equation} \label{pbi}
\lexmax_{s\in \S}\limits \begin{bmatrix}
-\sum_{p\in s}\mu_{1p}	\\
\vdots\\
-\sum_{p\in s}\mu_{(i-1)p}	\\
\end{bmatrix} \, .
\end{equation}
Since $i-1 \geq i^*$, there exist at least two solutions in $L$ providing different values of $-\sum\limits_{p\in s} \begin{bsmallmatrix}
\mu_{1p}	\\
\vdots\\
\mu_{(i-1)p}	\\
\end{bsmallmatrix}$ (at least at row $i^*$). This implies that $L$ contains all the optimal solutions of problem~\eqref{pbi} including $s^*$, and the result follows.
\end{proof}

\section{Numerical experiments}

\subsection{Instances and setting}

The method proposed in the paper---Algorithm~\ref{algo} with Algorithm~\ref{algo:shortestpath} for the pricing problems, together with the ``reduction'' trick of Section~\ref{subsec:Imp-RCLPP-Lex}---has been tested on eight sets of industrial instances provided by Air France. All the sets have five different instances, except the last set that contains one instance.  The first column of Table~\ref{tab:instances} provides the name of these sets. The next three columns present: the number of instances of each set, the number $m$ of pilots, and the number of pairings. The last two columns give the size of the graph associated with each pilot: the number of vertices and arcs. All the instances correspond to a horizon of one month with long-haul flights (international flights). The number of days on of the pairings varies from 2 to 8 days.

\begin{table}
\caption{Problems sizes}
\centering
\begin{tabular}{lrrrrrr}
\toprule
& \multicolumn{3}{c}{Instance} &
\multicolumn{2}{c}{Graph} \\
\cmidrule(r{4pt}){2-4} \cmidrule(l){5-6}
& \multicolumn{1}{c}{Number of} & \multicolumn{1}{c}{Number of}  & \multicolumn{1}{c}{Number of} &  \multicolumn{1}{c}{Number of} & \multicolumn{1}{c}{Number of} \\
&\multicolumn{1}{c}{instances} & \multicolumn{1}{c}{pilots ($m$)}  & \multicolumn{1}{c}{pairings} &  \multicolumn{1}{c}{vertices} & \multicolumn{1}{c}{arcs}\\
\midrule
$I_1$ &5 & 17& [59; 67] &[61; 69]  &[1,235; 1,646] \\
$I_2$ &5 &25 &[96; 110]  &[98; 112]&[3,206; 4,604]  \\
$I_3$ &5 &50 &[150; 160] & [152; 162] &[7,320; 8,962]   \\
$I_4$ &5 &70 & [209; 210]&[211; 212]  &  [14,114; 15,232]\\
$I_5$ &5 &80 &[239; 240] &[241; 242]  & [18,581; 19,389]\\
$I_6$ &5&90 & [269; 270]& [271; 272]& [23,301; 25,210] \\
$I_7$ &5 &100 & [297; 300] & [299; 302] & [28,687; 29,664] \\
$I_8$ &1 & 150 & 497 &499&79,558\\
\bottomrule
\end{tabular}
\label{tab:instances}
\end{table}

The experiments have been conducted on a server with 192 GB of RAM and 32 cores at 3.30 GHz. The algorithms have not been parallelized. The mathematical models have been solved using Gurobi 9.02, with no ``presolve,'' to ensure a total control on the bases and variables considered by the solver. This is required by the computation of the optimal basis; see Section~\ref{sec:master}. 

Preliminary experiments have shown that the following choices provide good results in general:
\begin{itemize}
    \item For step~\ref{pricing}, we set the number of columns added at each iteration to $10$ columns (if any). (Algorithm~\ref{algo:shortestpath} is used with the adaptation for the variant ``$N$-best-paths'' with  $N \coloneqq 10$.)
\item For the ``reduction'' trick of Section~\ref{subsec:Imp-RCLPP-Lex}, we set $K \coloneqq 500$. 
\end{itemize}

We have realized during the experiments how tricky is the management of the numerical precision within lexicographic optimization: two solutions wrongly ordered according to one objective can lead to a huge degradation on objectives with lower priority; in the pricing problem, this can even lead to stop the column generation while there are still columns that can improve drastically the solution for objectives with lower priority. However, we did not require any new idea in that respect; the only message is that the precision has to be dealt with in a very rigorous manner.

\subsection{Results}
The results are reported in Table~\ref{tab:cpualgo2} in two parts: the average over the five instances of each set, and the worst instance in terms of the computational time. The first six columns of each part present respectively the time (in seconds) needed by the overall algorithm (Algorithm~\ref{algo}), the master problem, the pricing problems (which is formed by the total time spent in Algorithm~\ref{algo:shortestpath} plus the small extra time for the ``reduction'' trick), the integer linear lexicographic program of step~\ref{solver}, the pricing with gap of step~\ref{pricing2}, and the integer linear lexicographic program of step~\ref{solver2}. 

For some instances, the pricing with gap of step~\ref{pricing2} of Algorithm~\ref{algo} generates a huge number of columns which complicates solving the integer linear lexicographic program of step~\ref{solver2}. We limited the number of columns generated at step~\ref{pricing2} to 100,000 columns for instances with at least 70 pilots. The column ``Most sen. p. w/gap'' presents the (average) index of the most senior pilot who is ``unsatisfied,'' i.e., the most senior pilot for which the algorithm has not certified the optimality of his final score. The column ``Gap'' then shows the (average) gap between the upper bound on the score of most senior ``unsatisfied'' pilot, provided by the LP relaxation (denoted by $UB$), and his final score (denoted by $LB$). This gap is evaluated as $(UB-LB)/LB$, converted in percentages. For both columns, the average is taken only on the instances for which optimality of the final solution has not been certified (the number of such instances is given between parentheses).

\begin{table}
\caption{Results of the overall method.}
\centering
\resizebox{\textwidth}{!}{\begin{tabular}{lrrrrrrrrrrrrrrrr}
\toprule
& \multicolumn{8}{c}{Average} &
\multicolumn{8}{c}{Worst} \\
\cmidrule(r{4pt}){2-9} \cmidrule(l){10-17}
& \multicolumn{1}{c}{Total} & \multicolumn{1}{c}{Master}  & \multicolumn{1}{c}{Pricing}  & \multicolumn{1}{c}{First} & \multicolumn{1}{c}{Pricing} & \multicolumn{1}{c}{Second} & \multicolumn{1}{c}{Most sen.} & \multicolumn{1}{c}{Gap} & \multicolumn{1}{c}{Total} & \multicolumn{1}{c}{Master}  & \multicolumn{1}{c}{Pricing}  & \multicolumn{1}{c}{First} & \multicolumn{1}{c}{Pricing} & \multicolumn{1}{c}{Second} & \multicolumn{1}{c}{Most sen.} & \multicolumn{1}{c}{Gap}\\
& \multicolumn{1}{c}{(s)}  &  \multicolumn{1}{c}{pb.} & \multicolumn{1}{c}{pbs.} & \multicolumn{1}{c}{ILLP} & \multicolumn{1}{c}{w/ gap} & \multicolumn{1}{c}{ILLP}& \multicolumn{1}{c}{p. w/ gap} & \multicolumn{1}{c}{($\%$)}  & \multicolumn{1}{c}{(s)} & \multicolumn{1}{c}{pb.} & \multicolumn{1}{c}{pbs.}& \multicolumn{1}{c}{ILLP} & \multicolumn{1}{c}{w/ gap} & \multicolumn{1}{c}{ILLP} & \multicolumn{1}{c}{p. w/ gap} & \multicolumn{1}{c}{($\%$)}\\
&  &  \multicolumn{1}{c}{(s)}   & \multicolumn{1}{c}{(s)}  &\multicolumn{1}{c}{(s)}  & \multicolumn{1}{c}{(s)} & \multicolumn{1}{c}{(s)}  & \multicolumn{1}{c}{(\# non-opt.)} &   & & \multicolumn{1}{c}{(s)} & \multicolumn{1}{c}{(s)} & \multicolumn{1}{c}{(s)} & \multicolumn{1}{c}{(s)} & \multicolumn{1}{c}{(s)} &  & \\
\midrule
$I_1$ & 12.0 & 0.7 & 1.0 & 1.7 & 0.1 & 8.4 & - & -& 44.1 &     0.7 &        0.8 &      2.1 &        0.5 &      40.0 &  - & - \\
$I_2$ & 57.0 & 3.1 & 42.7 & 11.2 & 0.0 & 0.0  & - & -& 160.2 &      3.9 &        142.8 &      13.5 &            0.0 &            0.0  & - & -\\
$I_3$ &250.4 & 21.0 & 121.8 & 107.6 & 0.0 & 0.0  & - & -& 408.8 &      25.3 &      245.1 &      138.3 &            0.0 &            0.0  & - & - \\
$I_4$ & 1,118.4 & 61.3 & 223.1 & 311.0 & 1.6 & 521.5 & 38.3(3) & 62.8 &  1585.8 &      78.7 &      150.6 &      309.6 &        4.3 &      1042.7  & 20 & 13.3\\
$I_5$  & 1,967.8 & 105.1 & 975.2 & 530.8 & 0.5 & 356.2 & 53.0(1) & 8.3&3,802.4 &      111.1 &      1,236.8 &      671.0 &        2.3 &      1,781.2  & 53 & 8.3 \\
$I_6$ &  3,576.5 & 151.1 & 1,591.8 & 916.8 & 3.5 & 913.4 & 41.0(2) & 22.9&6,613.1 &      99.3 &      3,892.3 &      1,278.1 &        2.1 &      1,341.3  & 56 & 33.3\\
$I_7$ &3,223.4 & 262.4 & 604.1 & 993.1 & 2.3 & 1,361.5  & 50.3(3) & 25.6 &4,693.1 &      263.5 &      754.9 &      1,084.0 &        3.9 &      2,586.8 & 71 & 25.0  \\
$I_8$&79,556.0 & 3,011.7 & 28,657.9 & 47,886.4 & 0.0 & 0.0 & - & -&-&-&-&-&-&-&-&-\\
\bottomrule
\end{tabular}}
\label{tab:cpualgo2}
\end{table}

\subsection{Comments}

Table~\ref{tab:cpualgo2} shows that, except $I_8$, the method finishes in less than two hours with good solutions for all instances, which makes it relevant for practical applications. We emphasize again that the overall method is conceptually simple, which is an appealing feature when considering implementation aspects and maintenance of the code.

Further examination of the computational results (not presented in the tables) actually shows that, except $I_8$, the $26$ instances solved to optimality required around $1$ hour.

There are only $9$ out of the $36$ instances in total for which optimality has not been established. Yet, even for those $9$ instances, the solutions are good since the pilots that remain ``unsatisfied'' are among the less senior ones. The values of the gaps have to be mitigated by the fact that they concern only those ``unsatisfied'' pilots, who are not senior. Moreover, the way scores are assigned by the pilots makes that even a small unsatisfaction may lead to huge gaps: they often formulate a few number of preferences, and not satisfying only one of those may have a strong impact on the final score.

The big instance $I_8$ with $150$ pilots has also been solved to optimality. The computational time is much longer---almost one day---but note that in this context longer computational times are acceptable.


Moreover, our algorithm presents better computational time than the exact approach proposed by \cite{achour2007}. This latter can solve instances of size up to $91$ pilots, and the computational time of one of these instances exceeds five days. However, this comment should be taken with care since the two methods have not been tested on the same instances.

The computational time increases while increasing the size of the instances. The number of pilots has a direct impact on the number of linear programs and pricing problems solved at each iteration of the column generation, whereas increasing the number of pairings increases the size of the graphs and the set of feasible schedules. 

We note that, even if this column generation is applied in a context that departs from the usual single objective framework, the distribution of the computational time between the master problem and the pricing problem follows the usual behavior, namely that most of this time is spent within the pricing problem (82.28\%). Any neat improvement of the lexicographic longest path algorithm would therefore be beneficial to the overall method. 


\subsection{Further statistics on the method}

Table~\ref{tab:CGalgo2} provides more details about the columns added during the resolution of the relaxation of problem~\eqref{partial-problem} (essentially step~\ref{pricing} of Algorithm~\ref{algo}).

\begin{table}
\caption{Statistics on the column generation.}
\scriptsize
\begin{tabular}{lrrrrrrr}
\toprule
&  & & & \multicolumn{2}{c}{Pricing problem~\eqref{genericpbj}}& \multicolumn{2}{c}{Pricing problem~\eqref{eq:pricingpb}}\\
\cmidrule(r{4pt}){5-6} \cmidrule(l){7-8}
  &  \multicolumn{1}{c}{Avg. eliminated}& \multicolumn{1}{c}{Column generation}& \multicolumn{1}{c}{Generated}& \multicolumn{1}{c}{Saved} & \multicolumn{1}{c}{Cuts} & \multicolumn{1}{c}{Saved}& \multicolumn{1}{c}{Cuts}\\
 
 &  \multicolumn{1}{c}{subproblems (\%)}& \multicolumn{1}{c}{iterations}& \multicolumn{1}{c}{columns}& \multicolumn{1}{c}{paths} & \multicolumn{1}{c}{by LB}& \multicolumn{1}{c}{paths}& \multicolumn{1}{c}{by LB}\\
\midrule
$I_1$ & 64.4& 33.4 & 4,289.8&208,555.6&	448,456.2&	332,792.6&	1,729,496.0\\
$I_2$ & 60.0 & 46.4 & 9,391.4&1,604,777.8&	10,363,891.0&	14,023,259.6&	108,830,561.4\\
$I_3$ & 52.0 & 70.6 & 24,256.2&3,303,615.8&	13,759,397.2&	48,637,332.2&	232,219,561.2\\
$I_4$ &56.8 & 87.4 & 44,155.8&4,336,915.6&28,876,875.2&
50,113,647.2&500,247,168.2 \\
$I_5$ &46.4 & 98.4 & 56,354.6&10,560,068.2	&90,417,732.0&	188,055,428.4&	1,530,782,977.0\\
$I_6$ & 51.9 & 102.6 & 67,087.8&14,257,280.8	&124,079,553.8&	263,535,659.4	&995,461,637.4  \\
$I_7$ & 64.3 & 124.6 & 94,865.4&4,218,223.8&	35,555,643.8&	100,001,567.4&	1,051,362,117.0 \\
$I_8$&70.8 & 286.0 & 335,159.0 &48,131,858.0&1,071,822,361.0&1,264,065,910.0&727,950,919.0\\
\bottomrule
\end{tabular}
\label{tab:CGalgo2}
\end{table}

``Avg. eliminated subproblems (\%)'' shows the average percentage of problems~\eqref{eq:pricingpb} per column generation iteration that the ``reduction'' trick solves by a single call to Algorithm~\ref{algo:shortestpath}. Without this trick, the number of such problems to be solved at each iteration is $m$, the number of pilots. As we explain in Section~\ref{subsec:improve-reduc}, the trick clearly improves the computational times spent in the pricing problems (e.g., reduction of this time by 50\% in the case of $I_8$). Complementary experiments have shown that choosing much larger $K$ does not significantly reduce the number of pricing problems to be solved, while it increases the computation time of the ``$N$-best-paths'' variant of Algorithm~\ref{algo:shortestpath}.


``Column generation iterations'' is the number of times the {\bf repeat}-{\bf until} loop in Algorithm~\ref{algo} is repeated.

``Generated columns'' is the total number of columns that have been added by step~\ref{pricing} of Algorithm~\ref{algo}.

The statistics of Algorithm~\ref{algo:shortestpath} are distinguished according to whether it is used to solve the pricing problem~\eqref{genericpbj} or the pricing problem~\eqref{eq:pricingpb} at step~\ref{pricing} of Algorithm~\ref{algo}.
 The columns ``Saved paths'' and ``Cuts by LB'' present respectively the total number of paths that have been saved during the execution of Algorithm~\ref{algo:shortestpath} for a possible extension, and the total number of paths that have been discarded by lower bounds. The number of paths discarded is in general greater than the number of saved paths, which shows that the lower bounds are important for the efficiency of Algorithm~\ref{algo:shortestpath}.

\subsection{Relevance of the ``reduction'' trick}\label{subsec:improve-reduc}

Experiments on the same instances, but without the ``reduction'' trick, have been carried out. 
Table~\ref{tab:cpualgo1} gathers the results of these experiments. The information reported in this table is the same as that of Table~\ref{tab:cpualgo2}. The ``reduction'' trick reduces the total computational times for all sets of instances, except $I_1$ and $I_4$. For $I_1$, this is not significant because these are small instances. For $I_4$, a closer inspection on the results has shown that the ``reduction'' trick reduces the total computational time for four out of the five instances: for only one instance, the total computational time is larger (by a huge amount) when the ``reduction'' trick is used; this is actually due to a very high number of columns that have been generated, which makes the last call to the solver very costly (in Table~\ref{tab:cpualgo2}, two instances of $I_4$ are not solved to optimality; only one in Table~\ref{tab:cpualgo1}).

\begin{table}
\caption{Results of the overall method without the ``reduction'' trick.}
\centering
\resizebox{\textwidth}{!}{\begin{tabular}{lrrrrrrrrrrrrrrrr}
\toprule
& \multicolumn{8}{c}{Average} &
\multicolumn{8}{c}{Worst} \\
\cmidrule(r{4pt}){2-9} \cmidrule(l){10-17}
& \multicolumn{1}{c}{Total} & \multicolumn{1}{c}{Master}  & \multicolumn{1}{c}{Pricing}  & \multicolumn{1}{c}{First} & \multicolumn{1}{c}{Pricing} & \multicolumn{1}{c}{Second} & \multicolumn{1}{c}{Most sen.} & \multicolumn{1}{c}{Gap} & \multicolumn{1}{c}{Total} & \multicolumn{1}{c}{Master}  & \multicolumn{1}{c}{Pricing}  & \multicolumn{1}{c}{First} & \multicolumn{1}{c}{Pricing} & \multicolumn{1}{c}{Second} & \multicolumn{1}{c}{Most sen.} & \multicolumn{1}{c}{Gap}\\
& \multicolumn{1}{c}{(s)}  &  \multicolumn{1}{c}{pb.} & \multicolumn{1}{c}{pbs.} & \multicolumn{1}{c}{ILLP} & \multicolumn{1}{c}{w/ gap} & \multicolumn{1}{c}{ILLP}& \multicolumn{1}{c}{p. w/ gap} & \multicolumn{1}{c}{($\%$)}  & \multicolumn{1}{c}{(s)} & \multicolumn{1}{c}{pb.} & \multicolumn{1}{c}{pbs.}& \multicolumn{1}{c}{ILLP} & \multicolumn{1}{c}{w/ gap} & \multicolumn{1}{c}{ILLP} & \multicolumn{1}{c}{p. w/ gap} & \multicolumn{1}{c}{($\%$)}\\
&  &  \multicolumn{1}{c}{(s)}   & \multicolumn{1}{c}{(s)}  &\multicolumn{1}{c}{(s)}  & \multicolumn{1}{c}{(s)} & \multicolumn{1}{c}{(s)}  & \multicolumn{1}{c}{(\# non-opt.)} &   & & \multicolumn{1}{c}{(s)} & \multicolumn{1}{c}{(s)} & \multicolumn{1}{c}{(s)} & \multicolumn{1}{c}{(s)} & \multicolumn{1}{c}{(s)} &  & \\
\midrule
$I_1$ &  7.1 & 0.8 & 1.9 & 1.6 & 0.1 & 2.7  & -& - &16.1& 1.0 &   2.0 &      2.5 &  0.2 & 10.4&-&-\\

$I_2$ & 113.2 & 3.4 & 94.6 & 15.2 & 0 & 0  & -& -& 355.4 &4.9 & 320.6 &      29.8 & 0&0&-&-\\

$I_3$ &  266.2 & 19.7 & 154.7 & 91.8 & 0 & 0  & - & -&384.0 &      20.1 &      249.4 &      114.4 &            0 &            0&-&-\\

$I_4$ &   1,149.7 & 62.1 & 279.7 & 279.5 & 1.7 & 526.7  &  38.3(3) & 62.8 & 1666.2 &      76.8 &      322.3 &      284.5 &        4.7 &      977.8 & 20 & 13.3\\

$I_5$ & 2,240.2 & 103.4 & 1,267.2 & 546.4 & 0.4 & 322.8 & 53.0(1) & 8.3& 4,310.8 &      113.5 &      2,048.1 &       533.2 &        2.0 &      1,614.1 & 53 & 8.3 \\

$I_6$ &   4,431.6 & 155.6 & 2,339.9 & 943.3 & 4.0 & 988.7 & 41.0(2) & 20.2&10,088.9 & 112.1 &6,168.0 &      1,467.9 &        4.6 &      2,336.2  & 56 & 33.3  \\

$I_7$ & 3,336.6 & 246.5 & 996.5 & 919.8 & 2.1 & 1,171.7  & 50.3(3) & 25.6& 4,508.7 &       268.9 &      1,030.4 &      884.0 &        4.2 &      2,321.2 & 71 & 25.0  \\
$I_8$&153,748.0 & 3,128.5 & 56,591.8 & 50,009.2 & 113.4 & 43,905.6 & - & - & - &-&-&-&-&-&-&-\\
\bottomrule
\end{tabular}}
\label{tab:cpualgo1}
\end{table}



\section{Conclusion}

In this work, we propose a lexicographic column generation approach for the PBS that deals directly with the lexicographic objective instead of using a sequence of column generation.
Our algorithm can solve to optimality instances with up to $100$ pilots in around $1$ hour, which makes it competitive with state-of-the-art approaches.
Beyond the generalization of the different algorithmic components of a column generation to the lexicographic setting, two elements are crucial for the performance.

The first element is the ability to find efficiently a lexicographic basic optimal solution of the master problem.
Given the importance of the implementation for simplex-like algorithms, this requires using state-of-the-art solvers.
Yet, even if such solvers provide the possibility of computing a lexicographic optimal solution using a simple sequential approach, they do not return a lexicographic primal-dual basis, which is a crucial ingredient in any column generation. We have established in Section~\ref{sec:master} that such primal-dual bases do exist and can be computed via a more elaborate sequential approach, which shows the desired performance.

The second element is the ability to solve efficiently the lexicographic pricing problems. A specificity of the PBS formulation considered in this work is that each objective corresponds to one block of the Dantzig--Wolfe decomposition. 
With the ``reduction'' trick introduced in Section~\ref{sec:simultaneousPricing}, the resolution of a single lexicographic resource-constrained longest path problem gives the optimal solution of most pilots pricing problems simultaneously.
We have seen in numerical experiments that this trick indeed improves the performance of the overall method.

\subsection*{Acknowledgments}
The authors are grateful to Boris Detienne for bringing to their attention the papers by \cite{crowder_solving_1983} and~\cite{dantzig_solution_1954} mentioned in Section~\ref{subsec:prel}. They are also thankful to Mohand Ait Alamara and Séverine Bonnechère at Air France for their explanations concerning the constraints defining feasible schedules and for providing concrete instances. 

\bibliographystyle{unsrtnat}
\bibliography{biblio}

\begin{thebibliography}{24}
\providecommand{\natexlab}[1]{#1}
\providecommand{\url}[1]{\texttt{#1}}
\expandafter\ifx\csname urlstyle\endcsname\relax
  \providecommand{\doi}[1]{doi: #1}\else
  \providecommand{\doi}{doi: \begingroup \urlstyle{rm}\Url}\fi

\bibitem[Gopalakrishnan and Johnson(2005)]{Gopalakrishnan2005}
B.~Gopalakrishnan and E.~L. Johnson.
\newblock Airline crew scheduling: State-of-the-art.
\newblock \emph{Annals of Operations Research volume}, 140:\penalty0 305–337,
  2005.

\bibitem[Kasirzadeh et~al.(2017)Kasirzadeh, Saddoune, and
  Soumis]{Kasirzadeh2017}
A.~Kasirzadeh, M.~Saddoune, and F.~Soumis.
\newblock Airline crew scheduling: models, algorithms, and data sets.
\newblock \emph{EURO Journal on Transportation and Logistics}, 6\penalty0
  (2):\penalty0 111–137, 2017.

\bibitem[Zeighami et~al.(2020)Zeighami, S., and Soumis]{ZEIGHAMI2020211}
V.~Zeighami, M.~S., and F.~Soumis.
\newblock Alternating {L}agrangian decomposition for integrated airline crew
  scheduling problem.
\newblock \emph{European Journal of Operational Research}, 287\penalty0
  (1):\penalty0 211--224, 2020.

\bibitem[Boubaker et~al.(2010)Boubaker, Desaulniers, and
  Elhallaoui]{BOUBAKER201050}
K.~Boubaker, G.~Desaulniers, and I.~Elhallaoui.
\newblock Bidline scheduling with equity by heuristic dynamic constraint
  aggregation.
\newblock \emph{Transportation Research Part B: Methodological}, 44\penalty0
  (1):\penalty0 50--61, 2010.

\bibitem[Gamache et~al.(1999)Gamache, Soumis, Marquis, and
  Desrosiers]{Gamache1999}
M.~Gamache, F.~Soumis, G.~Marquis, and J.~Desrosiers.
\newblock A column generation approach for large scale aircrew rostering
  problems.
\newblock \emph{Operations Research}, 47\penalty0 (2):\penalty0 247–262,
  1999.

\bibitem[Kohl and Karisch(2004)]{kohl2004}
N.~Kohl and S.~E. Karisch.
\newblock Airline crew rostering: Problem types, modeling, and optimization.
\newblock \emph{Annals of Operations Research}, 127:\penalty0 223--257, 2004.

\bibitem[Gamache et~al.(1998)Gamache, Soumis, Villeneuve, Desrosiers, and
  G\'{e}linas]{GamachePBS1998}
M.~Gamache, F.~Soumis, D.~Villeneuve, J.~Desrosiers, and \'{E}. G\'{e}linas.
\newblock The preferential bidding system at {A}ir {C}anada.
\newblock \emph{Transportation Science}, 32\penalty0 (3):\penalty0 246--255,
  1998.

\bibitem[Achour et~al.(2007)Achour, Gamache, Soumis, and
  Desaulniers]{achour2007}
A.~Achour, M.~Gamache, F.~Soumis, and G.~Desaulniers.
\newblock An exact solution approach for the preferential bidding system
  problem in the airline industry.
\newblock \emph{Transportation Science}, 41\penalty0 (3):\penalty0 354--365,
  2007.

\bibitem[Isermann(1982)]{Isermann1982}
H.~Isermann.
\newblock Linear lexicographic optimization.
\newblock \emph{Operations Research Spektrum}, 4:\penalty0 223--228, 1982.

\bibitem[Akg\"ul(1984)]{akgul1984}
M.~Akg\"ul.
\newblock A note on lexicographic linear programming.
\newblock \emph{INFOR: Information Systems and Operational Research},
  22\penalty0 (4):\penalty0 343--343, 1984.

\bibitem[Cacchiani and Salazar-González(2016)]{Valentina2016}
V.~Cacchiani and J-J. Salazar-González.
\newblock Optimal solutions to a real-world integrated airline scheduling
  problem.
\newblock \emph{Transportation Science}, 51\penalty0 (1):\penalty0 250--268,
  2016.

\bibitem[Moore et~al.(1978)Moore, Evans, and Ngo]{Moore1978}
R.~Moore, J.~Evans, and H.~Ngo.
\newblock Computerized tailored blocking.
\newblock \emph{1978 AGIFORS symposium proceedings}, 18:\penalty0 343--361,
  1978.

\bibitem[Byrne(1988)]{Byrne1988}
J.~Byrne.
\newblock A preferential bidding system for technical aircrew.
\newblock \emph{1988 AGIFORS symposium proceedings}, 28:\penalty0 87--99, 1988.

\bibitem[Gamache et~al.(2007)Gamache, Hertz, and Ouellet]{GAMACHE20072384}
M.~Gamache, A.~Hertz, and J.~O. Ouellet.
\newblock A graph coloring model for a feasibility problem in monthly crew
  scheduling with preferential bidding.
\newblock \emph{Computers \& Operations Research}, 34\penalty0 (8):\penalty0
  2384--2395, 2007.

\bibitem[Heßler et~al.(2022)Heßler, Irnich, Kreiter, and
  Pferschy]{katrin2022}
K.~Heßler, S.~Irnich, T.~Kreiter, and U.~Pferschy.
\newblock Bin packing with lexicographic objectives for loading weight-and
  volume-constrained trucks in a direct-shipping system.
\newblock \emph{OR Spectrum}, 44:\penalty0 1--43, 2022.

\bibitem[Pourkarimi and Zarepisheh(2007)]{POURKARIMI20071348}
L.~Pourkarimi and M.~Zarepisheh.
\newblock A dual-based algorithm for solving lexicographic multiple objective
  programs.
\newblock \emph{European Journal of Operational Research}, 176\penalty0
  (3):\penalty0 1348--1356, 2007.

\bibitem[Cococcioni et~al.(2018)Cococcioni, Pappalardo, and
  Sergeyev]{COCOCCIONI2018298}
M.~Cococcioni, M.~Pappalardo, and Y.~D. Sergeyev.
\newblock Lexicographic multi-objective linear programming using grossone
  methodology: Theory and algorithm.
\newblock \emph{Applied Mathematics and Computation}, 318:\penalty0 298--311,
  2018.

\bibitem[Cococcioni et~al.(2020)Cococcioni, Cudazzo, Pappalardo, and
  Sergeyev]{COCOCCIONI2020105177}
M.~Cococcioni, A.~Cudazzo, M.~Pappalardo, and Y.~D. Sergeyev.
\newblock Solving the lexicographic multi-objective mixed-integer linear
  programming problem using branch-and-bound and grossone methodology.
\newblock \emph{Communications in Nonlinear Science and Numerical Simulation},
  84:\penalty0 105177, 2020.

\bibitem[Crowder et~al.(1983)Crowder, Johnson, and
  Padberg]{crowder_solving_1983}
H.~Crowder, E.~L. Johnson, and M.~Padberg.
\newblock Solving {Large}-{Scale} {Zero}-{One} {Linear} {Programming}
  {Problems}.
\newblock \emph{Operations Research}, 31\penalty0 (5):\penalty0 803--834, 1983.

\bibitem[Nemhauser and Wolsey(1988)]{Nemhauser1988}
G.~Nemhauser and L.~Wolsey.
\newblock \emph{Integer and Combinatorial Optimization}, chapter II.5, pages
  383--432.
\newblock John Wiley \& Sons, Ltd, 1988.

\bibitem[Dantzig et~al.(1954)Dantzig, Fulkerson, and
  Johnson]{dantzig_solution_1954}
G.~Dantzig, R.~Fulkerson, and S.~Johnson.
\newblock Solution of a {Large}-{Scale} {Traveling}-{Salesman} {Problem}.
\newblock \emph{Journal of the Operations Research Society of America},
  2\penalty0 (4):\penalty0 393--410, 1954.
\newblock ISSN 0096-3984.
\newblock Publisher: INFORMS.

\bibitem[Parmentier(2019)]{parmentierAlgorithmsNonlinearStochastic2019}
A.~Parmentier.
\newblock Algorithms for non-linear and stochastic resource constrained
  shortest path.
\newblock \emph{Mathematical Methods of Operations Research}, 89\penalty0
  (2):\penalty0 281--317, 2019.

\bibitem[Poullet and Parmentier(2020)]{poulletShiftPlanningDelay2020}
J.~Poullet and A.~Parmentier.
\newblock Shift {{Planning Under Delay Uncertainty}} at {{Air France}}: {{A
  Vehicle-Scheduling Problem}} with {{Outsourcing}}.
\newblock \emph{Transportation Science}, 54\penalty0 (4):\penalty0 956--972,
  2020.

\bibitem[Irnich and Desaulniers(2005)]{irnich2005shortest}
Stefan Irnich and Guy Desaulniers.
\newblock Shortest path problems with resource constraints.
\newblock In \emph{Column generation}, pages 33--65. Springer, 2005.

\end{thebibliography}

\end{document}